\newtheorem{pr}{Proposition}
\newtheorem{lemma}{Lemma}
\newtheorem{de}{Definition}
\newtheorem{teo}{Theorem}
\newfont{\hueca}{msbm10}
\begin{document}
\date{}
\author{A.J. Calder\'{o}n, L.M. Camacho, B.A. Omirov}

\title[Leibniz algebras of Heisenberg type]{\bf Leibniz algebras of Heisenberg type}
%\thanks{L.M. Camacho and E.M. Ca\~nete would like to thank to
%the Institute of Ma\-the\-ma\-tics and Information Technologies of Uzbekistan for their hospitality. The second author was supported by PNPD/2009-CAPES (Brazil).}

\maketitle
\begin{abstract}
We introduce and
  provide a classification theorem for the class of Heisenberg-Fock  Leibniz algebras. This category of algebras is formed
  by
those Leibniz algebras  $L$ whose corresponding Lie algebras are
Heisenberg algebras $H_n$ and whose  ${H_n}$-modules $I$, where
$I$ denotes the ideal generated by the squares of elements of $L$,
are isomorphic to Fock modules. We also consider the
three-dimensional Heisenberg algebra $H_3$
 and study three classes of Leibniz algebras with $H_3$ as corresponding Lie algebra,  by taking certain generalizations of the Fock module. Moreover, we describe the class of Leibniz algebras
 with $H_n$ as corresponding Lie algebra and such that the action $I \times H_n \to I$ gives rise to a minimal faithful representation of $H_n$. The classification of this family of Leibniz algebras
  for the case of $n=3$ is given.
\end{abstract}

\medskip \textbf{AMS Subject Classifications (2010):
17A32, 17B30, 17B10.}

\textbf{Key words:}  Heisenberg algebra, Leibniz algebra, Fock
representation, minimal faithful representation.

\section{Introduction}

The term {\it Leibniz algebra} was introduced in the  study of  a
non-antisymmetric analogue of  Lie algebras by Loday \cite{Loday},
being so the class of Leibniz algebras an extension of the one of
Lie algebras. However this kind of algebras was previously studied
under the name of $D$-algebras by D. Bloh \cite{B1,B2,B3}. Since
the  1993 Loday's work many researchers have been attracted by
this category  of algebras,  being remarkable the great activity
in this field developed in the last years. This activity has been
mainly focussed in the frameworks of low dimensional algebras,
nilpotence and physics applications  (see \cite{  Dzhu, Ayu01,p5,
 Cabezas, p3, ca2,  ca5, p2, le12, p1, p6,  p4,  le21,     ma1,
ma2}).

\begin{de}\rm
A {\it Leibniz algebra}  ${L}$ is a linear space over a base field
$\mathbb{F}$ endowed with a bilinear product $[\cdot, \cdot]$
satisfying the {\it  Leibniz identity}
$$ [[y,z],x]=[[y,x],z]+[y,[z,x]],$$ for all $x, y, z \in { L}$.
\end{de}

In presence of anti-commutativity, Jacobi identity becomes Leibniz
identity and therefore Lie algebras are examples of Leibniz algebras.
Throughout this paper  $\mathbb{F}$ will be  algebraically closed
and with zero characteristic.

\medskip

Let ${ L}$ be a Leibniz algebra. The ideal $I$ generated by the
squares of elements of the algebra $L$, that is  $I$ is generated by the set $\{[x,x]: x\in
{L}\}$, plays an important role in the theory since it determines
the (possible) non-Lie character of ${ L}$. From the Leibniz
identity, this ideal satisfies
%\begin{equation}\label{equi}
$$[{L},I]=0.$$
%\end{equation}
The quotient algebra $L / I$ is a Lie algebra, called the {\it
corresponding Lie algebra} of $L$, and the map $I \times L / I \to
I$, $(i,[x]) \mapsto [i,x]$, endows $I$ of a structure of $L /
I$-module (see \cite{huel1, huel3}). Observe that we
  can
write
\begin{equation}\label{atleti}
L=V\oplus I
\end{equation}
 where $V$ is a linear complement of $I$ in $L$ and
  $V$  is isomorphic as linear space to  $L / {I}$. From here,
Leibniz algebras give us the opportunity of treating in an
unifying way a Lie algebra together with a module over it.

\bigskip

On the other hand, we recall that Heisenberg (Lie) algebras play
an important role in mathematical physics and geometry, in
particular in Quantum Mechanics (see for instance \cite{f1,
Bender, Berrada, f3, deJeu, Gelou,  chinos, Kaplain, Konstantina,
f6, f5, f2,  f4, Semmes, Thurston}). Indeed, the Heisenberg
Principle of Uncertainty implies the non-compatibility of position
and momentum observables acting on fermions. This
non-compatibility reduces to non-commutativity of the
corresponding operators. If we represent by $\overline{{x}}$ the
operator associated to position and by
${\frac{\overline{\partial}}{\partial x}}$ the one associated to
momentum (acting for instance on a space $V$ of differentiable
functions of a single variable), then $[\overline{x},{
\frac{\overline{\partial}}{\partial x}}]=\overline{1}_V$ which is
non-zero. Thus we can identify the subalgebra generated by
$\overline{1},\overline{{x}}$ and
${\frac{\overline{\partial}}{\partial x}}$ with the
three-dimensional Heisenberg  algebra whose multiplication table
in the basis
$\{\overline{1},\overline{x},\frac{\overline{\partial}}{\partial
x}\}$ has as unique non-zero product
$[\overline{x},\frac{\overline{\partial}}{\partial
x}]=\overline{1}$.

\medskip

 For any non-negative integer $k$ the {\it Heisenberg algebra} of
%\margen{Of course, all of the below is just a first approach}
dimension $n=2k+1$ (denoted further by $H_n$) is characterized by the existence of a basis
\begin{equation}\label{base1}
 B =
\{\overline{1}, \overline{x}_1,\frac {\overline{\delta}}{\delta
x_1},\dots,\overline{x}_k,\frac{\overline{\delta}}{\delta x_k}\}
\end{equation}
  in which the
multiplicative non-zero relations are $$[\overline{x}_i, \frac
{\overline{\delta}}{\delta x_i}] = -[\frac
{\overline{\delta}}{\delta x_i}, \overline{x}_i]=\overline{1}$$
for $1 \leq i \leq k$.

\bigskip

In the present paper we are focusing in introducing  and studying
 several  classes of Leibniz algebras whose corresponding Lie
algebras are  Heisenberg algebras  $H_n$.  Recall that there is a
unique irreducible representation of the Heisenberg algebra (at
least a unique one that can be exponentiated). This is why
physicists are able to use the Heisenberg  commutation relations
to do calculations, without worry about what they are being
represented on. This representation is called the Fock (or
Bargmann-Fock) representation (see \cite{h1, h2, h3, h4, h5, h6}).
Physically this representation corresponds to an harmonic
oscillator, with the vector $\overline{1} \in \mathbb{C}[x]$ as
the vacuum state and $\overline{x}$ the operator that adds one
quantum to the vacuum state. This representation is also sometimes
known as the oscillator representation. For a given Heisenberg
algebra $H_n$, $n=2k+1$, this representation gives rise to the
so-called {\it Fock module} on $H_n$,  the linear space
${\mathbb{F}}[x_1,...,x_k]$ with the action induced by

\medskip
\begin{equation}\label{Fo}
\begin{array}{lll} ( p(x_1,...,x_k),\overline{1})& \mapsto &
p(x_1,...,x_k)\\{} ( p(x_1,...,x_k),\overline{x}_i)& \mapsto &
x_ip(x_1,...,x_k)\\{}
(p(x_1,...,x_k),\frac{\overline{\delta}}{\delta x_i})& \mapsto &
\frac {\delta}{\delta x_i}(p(x_1,...,x_k))
\end{array}
\end{equation}

 for any $p(x_1,...,x_k) \in
\mathbb{F}[x_1,...,x_k]$ and $i=1,...,k$.

\medskip

Taking now into account the above comments, we introduce in
Section 2  the class of {\it Heisenberg-Fock  Leibniz algebras} as
those Leibniz algebras   whose corresponding Lie algebras are
 $H_n$ and whose  ${H_n}$-modules $I$
are isomorphic to Fock modules, and provide a classification
theorem.  Thus, we have the opportunity of  considering Heisenberg
Lie algebras together with their Fock representations in a
unifying viewpoint. In this section we also consider a
generalization of this class of algebras by means of a direct sum
of Heisenberg algebras as corresponding Lie algebras,  and provide
also a classification theorem.

In Section 3, we center in the three-dimensional Heisenberg algebra $H_3$ and study three classes of Leibniz algebras with
$H_3$ as corresponding Lie algebra by taking certain generalizations of the Fock module. We also note that Sections 2
and 3 allow us to introduce several new classes of infinite-dimensional Leibniz algebras.

Finally, in Section 4, we deal with the category of Leibniz algebras with $H_n$ as corresponding algebra and such that the
action $I \times H_n \to I$ gives rise to a minimal faithful representation of $H_n$. A description of this category of
algebras is given and also a classification theorem when $n=3$.

\section{Classification of Heisenberg-Fock type  Leibniz algebras}

\subsection{Classification of $HFL_n$}

Consider a Heisenberg algebra $H_n$, with $n=2k+1$, and its Fock
module ${\mathbb{F}}[x_1,...,x_k]$ under the action (\ref{Fo}).
The {\it Heisenberg-Fock  Leibniz algebra} $HFL_n$ is defined as
the Leibniz algebra with corresponding Lie algebra $H_n$ and such
that the action $I \times H_n \to I$ makes of $I$ the Fock module.
Since ${\mathbb{F}}[x_1,...,x_k]$ is infinite-dimensional we get a
family of infinite-dimensional Leibniz algebras.

\begin{teo}\label{1} The  Heisenberg-Fock  Leibniz algebra  $HFL_{n}$
admits a basis
$$
 \{\overline{1}, \overline{x}_i,\frac {\overline{\delta}}{\delta
x_i}, \  x_1^{t_1}x_2^{t_2}\dots x_k^{t_k} \ | \ t_i\in \mathbb{N}\cup \{0\}, \ 1\leq i \leq k\}
$$
in such a way that the multiplication table on this basis has
the form:
$$\begin{array}{ll}
[\overline{x_i},\frac {\overline{\delta}}{\delta
x_i}]=\overline{1},&1 \leq i \leq k,\\{}
[\frac
{\overline{\delta}}{\delta x_i},\overline{x_i}]=-\overline{1},& 1
\leq i \leq k,%\\{}
\end{array}$$
$$\begin{array}{ll}
[x_1^{t_1}x_2^{t_2}\dots
x_k^{t_k},\overline{1}] = x_1^{t_1}x_2^{t_2}\dots x_k^{t_k},&\\{}
[x_1^{t_1}x_2^{t_2}\dots x_k^{t_k},\overline{x}_i] =
x_1^{t_1}\dots x_{i-1}^{t_{i-1}}x_i^{t_i+1} x_{i+1}^{t_{i+1}}\dots
x_k^{t_k},&1 \leq i \leq k,\\{} [x_1^{t_1}x_2^{t_2}\dots
x_k^{t_k},\frac {\overline{\delta}}{\delta x_i}] = t_i
x_1^{t_1}\dots x_{i-1}^{t_{i-1}} x_i^{t_i-1}x_{i+1}^{t_{i+1}}\dots
x_k^{t_k},&1 \leq i \leq k,
\end{array}$$
where the omitted products are equal to zero.
\end{teo}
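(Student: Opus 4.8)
The plan is to work with the decomposition $L = V \oplus I$ from (\ref{atleti}), where the complement $V$ maps isomorphically onto the corresponding Lie algebra $L/I \cong H_n$ and $I \cong \mathbb{F}[x_1,\dots,x_k]$ is the Fock module. I would fix in $V$ representatives $\overline{1}, \overline{x}_i, \frac{\overline{\delta}}{\delta x_i}$ of the Heisenberg basis (\ref{base1}) and take the monomials $x_1^{t_1}\cdots x_k^{t_k}$ as a basis of $I$, which already exhibits the basis in the statement. The relation $[L,I]=0$ forces at once $[V,I]=0$ and, since $I\subseteq L$, also $[I,I]=0$; this accounts for all the products omitted from the table whose second argument lies in $I$. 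By hypothesis the right action $I\times H_n\to I$ is the Fock action (\ref{Fo}), and transcribing it gives directly the three families $[x_1^{t_1}\cdots x_k^{t_k},\overline{1}]$, $[x_1^{t_1}\cdots x_k^{t_k},\overline{x}_i]$ and $[x_1^{t_1}\cdots x_k^{t_k},\frac{\overline{\delta}}{\delta x_i}]$ listed in the theorem.

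It remains to determine the products inside $V$. As $V\cong H_n$, for $v,w\in V$ I would write $[v,w]=\{v,w\}+\phi(v,w)$, where $\{\cdot,\cdot\}$ is the transported Heisenberg bracket and $\phi\colon V\times V\to I$ collects the $I$-valued corrections. Substituting triples from $V$ into the Leibniz identity and using $[V,I]=0$ together with the Fock action, the $V$-components cancel by the Jacobi identity of $H_n$, while the $I$-components produce a family of cocycle relations on $\phi$. For example the triple $(\overline{x}_i,\frac{\overline{\delta}}{\delta x_i},\overline{1})$ ties the correction in $[\overline{x}_i,\frac{\overline{\delta}}{\delta x_i}]$ to $\phi(\overline{1},\overline{1})$ and $\phi(\overline{x}_i,\overline{1})$ via the Fock action of $\frac{\overline{\delta}}{\delta x_i}$, i.e.\ via the operator $\partial/\partial x_i$.

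Next I would eliminate $\phi$ by adjusting the complement: replacing $V$ with $V'=\{v+\psi(v):v\in V\}$ for a linear $\psi\colon V\to I$ changes $\phi$ by the coboundary $(v,w)\mapsto \psi(v)\cdot w-\psi(\{v,w\})$, the dot denoting the Fock action. One first chooses $\psi(\overline{1})$ to cancel $\phi(\overline{1},\overline{1})$ and then solves the resulting first-order equations of the form $\partial \psi(\overline{x}_i)/\partial x_i=\cdots$, which is possible because $\partial/\partial x_i$ is surjective on $\mathbb{F}[x_1,\dots,x_k]$ in characteristic zero; this removes every correction and leaves precisely $[\overline{x}_i,\frac{\overline{\delta}}{\delta x_i}]=\overline{1}$, $[\frac{\overline{\delta}}{\delta x_i},\overline{x}_i]=-\overline{1}$, all other products in $V$ being zero. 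I expect the heart of the argument to lie exactly here: one must verify that the whole system of cocycle relations arising from all triples is solvable by a single map $\psi$, equivalently that the governing Leibniz $2$-cocycle is a coboundary with no surviving integrability obstruction.

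Finally I would check that this algebra really is $HFL_n$, that is, that the squares generate $I$. Although $[v,v]=0$ for every $v\in V$, the element $x=\overline{1}+1$, formed from the generator $\overline{1}\in V$ and the constant polynomial $1=x_1^{0}\cdots x_k^{0}\in I$, has square $[x,x]=1\in I$; bracketing $1$ repeatedly with the $\overline{x}_i$ then yields every monomial by the Fock action, so the ideal generated by the squares is all of $I$, confirming the stated structure.
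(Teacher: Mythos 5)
Your setup coincides with the paper's: decompose $L=V\oplus I$, read off the action of $H_n$ on $I$ from the Fock module, use $[L,I]=0$ for the omitted products, and remove the $I$-valued corrections to the brackets inside $V$ by a change of complement $v\mapsto v+\psi(v)$. The gap is that you never actually carry out the one step that constitutes the proof: you reduce everything to the claim that ``the governing Leibniz $2$-cocycle is a coboundary with no surviving integrability obstruction'' and then assert it. Worse, the mechanism you sketch for verifying it is the wrong one and would stall. Since the coboundary of $\psi$ is $(v,w)\mapsto \psi(v)\cdot w-\psi(\{v,w\})$, the single element $\psi(\overline{x}_i)$ must simultaneously satisfy $\psi(\overline{x}_i)=-\phi(\overline{x}_i,\overline{1})$ (from the pair $(\overline{x}_i,\overline{1})$, where $\overline{1}$ acts as the identity), $x_j\,\psi(\overline{x}_i)=-\phi(\overline{x}_i,\overline{x}_j)$, and $\partial\psi(\overline{x}_i)/\partial x_i-\psi(\overline{1})=-\phi(\overline{x}_i,\frac{\overline{\delta}}{\delta x_i})$. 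The first equation already pins $\psi(\overline{x}_i)$ down completely, so there is no freedom left to ``solve a first-order equation'' using surjectivity of $\partial/\partial x_i$; the remaining equations are compatibility conditions that you would still have to check, and your proposal contains no argument for them.

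The way to close the gap --- and what the paper does --- is to use $\psi$ only to normalize the brackets with $\overline{1}$: since $\overline{1}$ acts as the identity on the Fock module, setting $\psi(v)=-[v,\overline{1}]$ for $v\in\{\overline{1},\overline{x}_i,\frac{\overline{\delta}}{\delta x_i}\}$ kills all products $[\,\cdot\,,\overline{1}]$ with no differential equations involved. After that, no further choice is needed: for any $u,v$ in the Heisenberg part the Leibniz identity on the triple $\{u,v,\overline{1}\}$ gives $[[u,v],\overline{1}]=[[u,\overline{1}],v]+[u,[v,\overline{1}]]=0$, while the left-hand side equals exactly the $I$-component of $[u,v]$ (because $[\overline{1},\overline{1}]=0$ and $\overline{1}$ acts as the identity on $I$). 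Hence every remaining correction vanishes automatically, and no integrability obstruction can arise. Your final paragraph, checking that the squares do generate all of $I$ via $[\overline{1}+1,\overline{1}+1]=1$, is correct and is a worthwhile addition that the paper omits.
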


\begin{proof}
Taking into account  Equations (\ref{atleti}) and  (\ref{Fo}) we
conclude that
%\begin{equation}\label{base11}
$$ \{\overline{1}, \overline{x}_i,\frac {\overline{\delta}}{\delta
x_i}, \  x_1^{t_1}x_2^{t_2}\dots x_k^{t_k} \ | \ t_i\in
\mathbb{N}\cup \{0\}, \ 1\leq i \leq k\}$$
%\end{equation}
is a basis of $HFL_n$ and
$$\begin{array}{l}
[x_1^{t_1}x_2^{t_2}\dots x_k^{t_k},\overline{1}] =
x_1^{t_1}x_2^{t_2}\dots x_k^{t_k},\\{} [x_1^{t_1}x_2^{t_2}\dots
x_k^{t_k},\overline{x}_i] = x_1^{t_1}\dots
x_{i-1}^{t_{i-1}}x_i^{t_i+1} x_{i+1}^{t_{i+1}}\dots x_k^{t_k},\\{}
[x_1^{t_1}x_2^{t_2}\dots x_k^{t_k},\frac
{\overline{\delta}}{\delta x_i}] = t_i x_1^{t_1}\dots
x_{i-1}^{t_{i-1}} x_i^{t_i-1}x_{i+1}^{t_{i+1}}\dots x_k^{t_k},
\end{array}$$
for $1\leq i\leq k.$

Observe that we can write
$$\begin{array}{ll}
[\overline{x_i},\overline{1}]=p_i(x_1, x_2, \dots, x_k), & 1 \leq
i \leq k,\\{} [\frac {\overline{\delta}}{\delta
x_i},\overline{1}]=q_i(x_1, x_2, \dots, x_k),&1 \leq i \leq k,\\{}
[\overline{1},\overline{1}]=r(x_1, x_2, \dots, x_k),&
\end{array}$$
where $p_i, q_i, r \in {\mathbb{F}}[x_1,...,x_k]$.

Taking the following change of basis,
$$\overline{x_i}^{\prime} = \overline{x_i} - p_i(x_1, x_2, \dots, x_k),\quad 1 \leq i \leq k,$$
$$\frac {\overline{\delta}}{\delta x_i}^{\prime} = \frac {\overline{\delta}}{\delta x_i} - q_i(x_1,
x_2, \dots, x_k),\quad 1 \leq i \leq k,$$
$$\overline{1}^{\prime} = \overline{1} - r(x_1,x_2, \dots, x_k),$$
we derive
$$[\overline{x_i},\overline{1}]=0, \quad [\frac {\overline{\delta}}{\delta x_i},\overline{1}]=0, \quad
[\overline{1},\overline{1}]=0, \quad 1 \leq i \leq k.$$

Now denote
$$\begin{array}{lll}
[\overline{x_i},\overline{x_j}]=a_{i,j}(x_1, x_2, \dots, x_k),&[\frac {\overline{\delta}}{\delta x_i},\frac{\overline{\delta}}{\delta x_j}]=
b_{i,j}(x_1, x_2, \dots, x_k),&  1 \leq i, j \leq k,\\{}
[\frac {\overline{\delta}}{\delta x_i},\overline{x_j}]=
c_{i,j}(x_1, x_2, \dots, x_k),&[\overline{x_i}, \frac {\overline{\delta}}{\delta x_j}]=
d_{i,j}(x_1, x_2, \dots, x_k), &1 \leq i,j \leq k, \ i \neq j,\\{}
[\overline{x_i}, \frac {\overline{\delta}}{\delta x_i}]=
\overline{1} + e_{i}(x_1, x_2, \dots, x_k),&[\frac {\overline{\delta}}{\delta x_i}, \overline{x_i}]=-\overline{1} + f_{i}(x_1, x_2, \dots, x_k), & 1 \leq i \leq k,\\{}
[\overline{1}, \overline{x_i}]= h_{i}(x_1, x_2, \dots, x_k),&[\overline{1}, \frac {\overline{\delta}}{\delta x_i}]= g_{i}(x_1, x_2, \dots, x_k), & 1 \leq i \leq k.
\end{array}$$

The Leibniz identity on the following triples imposes further constraints on the products.
$$\begin{array}{llrl}
\mbox{ Leibniz identity }& & \mbox{ Constraint }&\\[1mm]
\hline \hline\\
\{\overline{x_i},\overline{x_j},\overline{1}\}&\Rightarrow &a_{i,j}(x_1, x_2,
\dots, x_k)=0, & 1 \leq i,j \leq k,\\[1mm]
\{\frac {\overline{\delta}}{\delta x_i},
\frac{\overline{\delta}}{\delta x_j},\overline{1}\}&\Rightarrow&b_{i,j}(x_1, x_2, \dots, x_k)=0, & 1 \leq i,j \leq k,\\[1mm]
\{\frac {\overline{\delta}}{\delta
x_i}, \overline{x_j},\overline{1}\}&\Rightarrow&c_{i,j}(x_1, x_2, \dots, x_k)=0, & 1 \leq i,j \leq k,\ i\neq j ,\\[1mm]
\{\overline{x_i},\frac{\overline{\delta}}{\delta x_j},\overline{1}\}&\Rightarrow&d_{i,j}(x_1, x_2, \dots, x_k)=0,& 1 \leq i,j \leq k,\ i\neq j,\\[1mm]
\{\overline{x_i},\frac{\overline{\delta}}{\delta
x_i},\overline{1}\}&\Rightarrow&e_{i}(x_1, x_2, \dots, x_k)=0,&1\leq i\leq k,\\[1mm]
\{\frac{\overline{\delta}}{\delta x_i},
\overline{x_i},\overline{1}\}&\Rightarrow &f_{i}(x_1, x_2, \dots, x_k)=0,&1\leq i\leq k,\\[1mm]
\{\overline{1}, \overline{x_i},\overline{1}\}&\Rightarrow&h_{i}(x_1, x_2, \dots, x_k)= 0,&1\leq i\leq k,\\[1mm]
\{\overline{1}, \frac{\overline{\delta}}{\delta x_i},\overline{1}\}&\Rightarrow &g_{i}(x_1, x_2, \dots, x_k)=0,&1\leq i\leq k.
\end{array}$$

The proof is complete.
\end{proof}

%\begin{remark} \rm If in the basis $
% \{\overline{1}, \overline{x}_i,\frac {\overline{\delta}}{\delta
%x_i}, \  x_1^{t_1}x_2^{t_2}\dots x_k^{t_k} \ | \ t_i\in \mathbb{N}\cup \{0\}, \ 1\leq i \leq k\}
%$ all $t_i=0$, then we get $I=\mathbb{F}[x_1, ..., x_k]=<0>$, that is, we have ordinary Heisenberg Lie algebra.
%\end{remark}

\smallskip

%\subsection{Classification of Leibniz algebras with $L/I \cong H_{2k_1+1}\oplus H_{2k_2+1}\oplus H_{2k_3+1}\oplus
%\dots \oplus H_{2k_s+1}$.}

\subsection{Classification of generalized Heisenberg-Fock Leibniz algebras}
 In this subsection  we are interested in classifying
the class of (infinite-dimensional) Leibniz algebras formed by
those Leibniz algebras $L$ satisfying that their corresponding Lie
algebras are finite direct sums of Heisenberg algebras  and  that
the actions on $I $ are induced by Fock representations.

Since
\begin{equation}\label{me1}
L/I \cong H_{2k_1+1}\oplus H_{2k_2+1}\oplus H_{2k_3+1}\oplus \dots
\oplus H_{2k_s+1},
\end{equation}
we easily get
\begin{equation}\label{eq1}
{\mathcal B}_i:=\{\overline{1_{i}},
\overline{x_{1,i}},\overline{x_{2,i}},...,\overline{x_{k_i,i}},\frac
{\overline{\delta}}{\delta x_{1,i}},\frac
{\overline{\delta}}{\delta x_{2,i}},...,\frac
{\overline{\delta}}{\delta x_{k_i,i}} \}
\end{equation}
for the standard basis of
$H_{2k_i+1}$, $i \in \{1,2,...,s\}$.

We put
\begin{equation}\label{me2}
I={\mathbb F}[x_1, ..., x_n],
\end{equation}
where $n=k_1+k_2+ \cdots +k_s$.

The action $$ I \times L/I \to I$$
given by
$$\begin{array}{lll}
(p(x_1,\dots,x_n),\overline{1_{i}})& \mapsto &
p(x_1,\dots,x_n)\\{} (p(x_1,\dots,x_n),\overline{x_{j,i}})&
\mapsto & p(x_1,\dots,x_n)x_{k_1+k_2+\cdots+k_{i-1}+j}\\{}
(p(x_1,\dots,x_n),\frac{\overline{\delta}}{\delta x_{j,i}})&
\mapsto &\frac {{\delta}}{\delta
x_{k_1+k_2+\cdots+k_{i-1}+j}}p(x_1,\dots,x_n)
\end{array}$$ for any $p(x_1,\dots,x_n) \in {\mathbb F}[x_1, ..., x_n]$ and
$(i,j)$ with  $i \in \{1,2,...,s\}$, $j\in \{1,...,k_i\}$,  endows
$I$ of a structure of $L/I$-module. Hence, we get a new family
of Heisenberg-Fock type Leibniz algebras which generalize the
previous ones considered in  $\S2.1$ (case $s=1$), that we
call {\it generalized Heisenberg-Fock  Leibniz algebras}, by
introducing the  algebras $L=L/I \oplus I$ with $L/I$ and $I$ as
in Equations (\ref{me1}) and (\ref{me2}). We will denote them as
$$HFL_{2k_1+1,2k_2+1,...,2k_s+1}.$$

\medskip

Our aim is to classify  this class of Leibniz algebras.

\medskip

By taking into account the previous arguments, it
is clear that for any $i\in \{1,2,...,s\}$ we have
$[H_{2k_i+1},H_{2k_i+1}] \subset H_{2k_i+1}$ being the
multiplication table among the elements in the basis ${\mathcal
B}_i$ as in Theorem \ref{1}. Therefore, we only need to study the
products $[H_{2k_i+1},H_{2k_j+1}] $ with $i,j\in \{1,2,...,s\}$
and $i \neq j$.

\begin{lemma}\label{me3}
Let $a \in {\mathcal B}_i$ and $b \in {\mathcal B}_j$, $i,j\in
\{1,2,...,s\}$ with  $i \neq j$. Then $[a,b]=0$.
\end{lemma}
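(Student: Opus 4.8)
The plan is to mimic the computation that established Theorem~\ref{1}, but now applied to cross products between basis elements coming from two \emph{distinct} Heisenberg summands. Let $a\in\mathcal{B}_i$ and $b\in\mathcal{B}_j$ with $i\neq j$. Since $[a,b]\in I={\mathbb F}[x_1,\dots,x_n]$ by construction, I may write $[a,b]=P(x_1,\dots,x_n)$ for some polynomial $P$, and the goal is to force $P=0$. The key tool is the Leibniz identity $[[y,z],x]=[[y,x],z]+[y,[z,x]]$, evaluated on carefully chosen triples that include the idempotent-acting element $\overline{1_\ell}$, because the action of each $\overline{1_\ell}$ on $I$ is the identity map by the action formulas, while $[L,I]=0$ and $[I,L]$ is governed by the Fock action.

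First I would record that, exactly as in the proof of Theorem~\ref{1}, the change of basis absorbing the $[a,\overline{1_i}]$-type products lets me assume $[a,\overline{1_\ell}]=0$ for the ``unit'' interactions, so the only unknowns are the genuine cross products $[a,b]=P$. Next, the main step: apply the Leibniz identity to the triple $\{a,b,\overline{1_j}\}$ (choosing the unit from the summand containing $b$). The left-hand side $[[a,b],\overline{1_j}]=[P,\overline{1_j}]$ equals $P$ itself, since $\overline{1_j}$ acts as the identity on $I$ under~(\ref{Fo}). On the right-hand side, $[[a,\overline{1_j}],b]$ vanishes because $a$ and $\overline{1_j}$ lie in different summands with the unit interaction already normalized to zero, and $[a,[b,\overline{1_j}]]$ also vanishes since $[b,\overline{1_j}]$ lands in $I$ and $[L,I]=0$. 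Comparing the two sides yields $P=0$, which is precisely $[a,b]=0$.

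The step I expect to need the most care is verifying that each term on the right-hand side genuinely vanishes: this requires that the ``unit'' products $[a,\overline{1_j}]$ and $[b,\overline{1_j}]$ behave as claimed, which is why the normalizing change of basis from Theorem~\ref{1} must be invoked first, and it requires distinguishing the case $b=\overline{1_j}$ (where the triple degenerates and one instead reads off $[a,\overline{1_j}]=0$ directly) from the generic case. One must also treat the subcases where $a$ or $b$ is itself a unit $\overline{1_i}$ or $\overline{1_j}$ separately, but these reduce to the already-normalized unit interactions. Since $\overline{1_j}$ acting as the identity on $I$ is the crucial mechanism forcing $P=0$, the whole argument hinges on that single evaluation, and the remaining triples only confirm consistency rather than producing new constraints.
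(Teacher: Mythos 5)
Your overall strategy is the right one (apply the Leibniz identity to a triple containing a unit element, use that units act as the identity on $I$, and use $[L,I]=0$), but the specific triple you chose leaves a genuine gap. You take $\{a,b,\overline{1_j}\}$ with the unit from $b$'s summand, and you need the term $[[a,\overline{1_j}],b]$ to vanish. But $[a,\overline{1_j}]$ with $j\neq i$ is itself one of the cross products that the lemma is asserting to be zero, so invoking its vanishing is circular. Nor can it be arranged by a preliminary change of basis: the normalization in Theorem~\ref{1} only kills $[a,\overline{1_i}]$ for the unit of $a$'s \emph{own} summand, and since every unit $\overline{1_\ell}$ acts as the identity on $I$, replacing $a$ by $a-Q$ shifts \emph{all} the products $[a,\overline{1_\ell}]$ by the same polynomial $-Q$; you cannot simultaneously normalize $[a,\overline{1_i}]$ and $[a,\overline{1_j}]$ to zero unless they already coincide, which is not known at this stage. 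If $[a,\overline{1_j}]=Q\neq 0$, your identity only gives $P=[Q,b]$, which does not close the argument.

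The fix, which is what the paper does, is to use the unit $\overline{1_i}$ from the summand containing $a$ instead. Then $[[a,b],\overline{1_i}]=[[a,\overline{1_i}],b]+[a,[b,\overline{1_i}]]$; the first term is $[0,b]=0$ because $[a,\overline{1_i}]=0$ genuinely holds by the within-summand normalization of Theorem~\ref{1}, and the second term vanishes because $[b,\overline{1_i}]\in I$ and $[L,I]=0$ (no assumption on the cross product $[b,\overline{1_i}]$ is needed). Since $\overline{1_i}$ acts as the identity on $I$, the left-hand side is $[a,b]$ itself, and the conclusion follows with no case distinctions. So the error is not in the mechanism but in which unit you paired with the product: the asymmetry of the Leibniz identity forces you to take the unit attached to the \emph{left} factor $a$.
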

\begin{proof}
For $i \neq j$ we have  $[a,b]=p$ and $[b,\overline{1_{i}}]=q$ for
some $p, q \in {\mathbb F}[x_1, ..., x_n]$.  Taking now into
account Theorem \ref{1} we  derive $[a,\overline{1_{i}}]=0$ and so
$$p=[[a,b],\overline{1_{i}}]=[[a,\overline{1_{i}}],b]+[a,[b,\overline{1_{i}}]]=0.$$
\end{proof}

The next theorem is now consequence of Theorem \ref{1} and Lemma
\ref{me3}.

\begin{teo} The Leibniz algebra  $HFL_{2k_1+1,2k_2+1,...,2k_s+1}$
admits a basis (see Equations (\ref{eq1}) and (\ref{me2}))
$$
{\mathcal B}_1 \dot{\cup} {\mathcal B}_2 \dot{\cup} \cdots
\dot{\cup} {\mathcal B}_s \dot{\cup} \{x_1^{t_1}x_2^{t_2}\cdots
x_n^{t_n}\ | \ t_i\in \mathbb{N}\cup \{0\}, \ 1\leq i \leq n\},$$
where $n=k_1+k_2+\cdots+k_s,$ and in such a way that the
multiplication table on this basis has the form:
$$\begin{array}{lll}
[\overline{x_{j,i}},\frac {\overline{\delta}}{\delta
x_{j,i}}]=\overline{1_i}, \qquad \quad [\frac
{\overline{\delta}}{\delta
x_{j,i}},\overline{x_{j,i}}]=-\overline{1_i},& & \\[1mm]
[x_1^{t_1}x_2^{t_2}\dots x_n^{t_n},\overline{1_i}] = x_1^{t_1}x_2^{t_2}\dots x_n^{t_n},&&\\[1mm]
[x_1^{t_1}x_2^{t_2}\dots x_n^{t_n},\overline{x}_{j,i}] =
x_1^{t_1}\dots
x_{k_1+\cdots+k_{i-1}+j-1}^{t_{k_1+\cdots+k_{i-1}+j-1}}x_{k_1+\cdots+k_{i-1}+j}^{t_{k_1+\cdots+k_{i-1}+j}+1}
x_{k_1+\cdots+k_{i-1}+j+1}^{t_{k_1+\cdots+k_{i-1}+j+1}}\dots x_n^{t_n},&&\\[1mm]
[x_1^{t_1}x_2^{t_2}\dots x_k^{t_k},\frac
{\overline{\delta}}{\delta x_{j,i}}] = t_{k_1+\cdots+k_{i-1}+j}
x_1^{t_1}\dots
x_{k_1+\cdots+k_{i-1}+j-1}^{t_{k_1+\cdots+k_{i-1}+j-1}}
x_{k_1+\cdots+k_{i-1}+j}^{t_{k_1+\cdots+k_{i-1}+j}-1}x_{k_1+\cdots+k_{i-1}+j+1}^{t_{k_1+\cdots+k_{i-1}+j+1}}\dots
x_n^{t_n},&&
\end{array}$$
for  $1 \leq i \leq s,$  $1 \leq j \leq k_i$ and where the omitted
products are equal to zero.
\end{teo}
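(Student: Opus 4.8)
The plan is to leverage the two results already proven, Theorem~\ref{1} and Lemma~\ref{me3}, and simply assemble them into the full multiplication table. Since the corresponding Lie algebra of $L$ splits as the direct sum \eqref{me1}, the underlying vector space of $L$ decomposes, via \eqref{atleti} and \eqref{me2}, as the disjoint union of the bases ${\mathcal B}_1,\dots,{\mathcal B}_s$ together with the monomial basis $\{x_1^{t_1}\cdots x_n^{t_n}\}$ of $I={\mathbb F}[x_1,\dots,x_n]$. Thus the first step is to record that this set is indeed a basis of $HFL_{2k_1+1,\dots,2k_s+1}$; this is immediate from the direct-sum description of $L/I$ and the identification of $I$ as a polynomial ring.

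The next step is to classify all products by the position of their two arguments. There are three cases. First, when both arguments lie in the same block ${\mathcal B}_i$, the subalgebra structure $[H_{2k_i+1},H_{2k_i+1}]\subset H_{2k_i+1}$ applies and Theorem~\ref{1} gives exactly the commutation relations $[\overline{x_{j,i}},\frac{\overline{\delta}}{\delta x_{j,i}}]=\overline{1_i}$ and $[\frac{\overline{\delta}}{\delta x_{j,i}},\overline{x_{j,i}}]=-\overline{1_i}$, with all other same-block products vanishing. Second, when the two arguments lie in distinct blocks ${\mathcal B}_i$ and ${\mathcal B}_j$ with $i\neq j$, Lemma~\ref{me3} shows the product is zero. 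Third, when the first argument is a monomial in $I$ and the second lies in some ${\mathcal B}_i$, the product is dictated by the module action $I\times L/I\to I$ specified just before the theorem; translating the raising, lowering, and identity actions into monomial form produces precisely the three displayed families of relations, where the global index $k_1+\cdots+k_{i-1}+j$ simply records the position of the variable $x_{j,i}$ inside $x_1,\dots,x_n$.

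The remaining products all vanish for structural reasons: products $[I,I]=0$ since $I$ is abelian as a module, and products of the form $[a,\overline{1_i}]=0$ for $a\in{\mathcal B}_i$ follow from the normalization carried out in the change of basis within the proof of Theorem~\ref{1}, which kills the polynomials $p_i,q_i,r$. One should verify that the same normalization can be performed simultaneously across all blocks; this is the one point requiring a brief check, but it is routine since the adjustments in each block involve only the distinguished generators of that block and do not interfere with one another.

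I expect no serious obstacle here: the theorem is essentially a bookkeeping corollary of the two preceding results, as the text itself signals by writing \emph{The next theorem is now consequence of Theorem~\ref{1} and Lemma~\ref{me3}}. The only point demanding care is notational: checking that the shifts $t_{k_1+\cdots+k_{i-1}+j}\mapsto t_{k_1+\cdots+k_{i-1}+j}\pm 1$ in the monomial relations correctly encode the Fock action on the $(k_1+\cdots+k_{i-1}+j)$-th variable. Once the global indexing is fixed, each relation transcribes directly from the block-wise statement, and assembling the three cases above completes the multiplication table, so the proof reduces to citing Theorem~\ref{1} for intra-block products, Lemma~\ref{me3} for inter-block products, and the module action for the $I$-products.
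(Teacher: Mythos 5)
Your proposal is correct and follows exactly the route the paper takes: the paper gives no separate proof, stating only that the theorem is a consequence of Theorem~\ref{1} and Lemma~\ref{me3}, and your assembly of intra-block products (Theorem~\ref{1}), inter-block products (Lemma~\ref{me3}), and the module action is precisely that argument spelled out. Your added remark about checking that the block-wise normalizations can be performed simultaneously is a sensible (and correct) extra care point that the paper leaves implicit.
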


\bigskip

%%%%%%%%%%%%%%%%%%%%%%%%%%%%%%%
\section{Several degenerations of the Fock representation for the \\ $3$-dimensional Heisenberg algebra}

In this section we consider several degenerations of the Fock
representation of the Heisenberg algebra   $H_3$. First, we study
when an extension of the Fock action  ${\mathbb F}[x] \times H_3
\to {\mathbb F}[x]$, (see Equation (\ref{Fo})),  by allowing
arbitrary polynomials as results of the action of a fixed element
in the basis $\{\overline{1},\overline{x},
\frac{\overline{\delta}}{\delta x}\}$ of  $H_3$ over the elements
of ${\mathbb F}[x]$, makes of ${\mathbb F} [x]$ an $H_3$-module.
Second, the new $H_3$-modules obtained in this way give rise to
new classes of Leibniz algebras that will be described.

\medskip

For any linear mapping $\Omega:\mathbb{F}[x] \to \mathbb{F}[x]$,
consider the linear space $\mathbb{F}[x]$ with the action induced
by the following applications:
 $$\begin{array}{ll}
 \begin{array}{llll}
\psi_1:&\mathbb{F}[x] \times H_3 &\rightarrow & \mathbb{F}[x]\\[1mm]

&(p(x),\overline{1})&\mapsto & \Omega(p(x))\\
&(p(x),\overline{x})&\mapsto & xp(x)\\
&(p(x),\frac{\overline{\delta}}{\delta x})& \mapsto &
\frac{\delta}{\delta x} p(x).
\end{array}&
\begin{array}{llll}
\psi_2:&\mathbb{F}[x] \times H_3 &\rightarrow & \mathbb{F}[x]\\[1mm]

&(p(x),\overline{1})&\mapsto & p(x)\\
&(p(x),\overline{x})&\mapsto & \Omega(p(x))\\
&(p(x),\overline{\frac{\delta}{\delta x}})& \mapsto &
\frac{\delta}{\delta x} p(x).
\end{array}\\[15mm]
\begin{array}{llll}
\psi_3:&\mathbb{F}[x] \times H_3 &\rightarrow & \mathbb{F}[x]\\[1mm]

&(p(x),\overline{1})&\mapsto & p(x)\\
&(p(x),\overline{x})&\mapsto & xp(x)\\
&(p(x),{\frac{\overline{\delta}}{\delta x}})& \mapsto &
\Omega(p(x))
\end{array}
\end{array}$$
%\end{itemize}
for any  $p(x)\in \mathbb{F}[x].$

From now on, let us denote by
%$\{\overline{1},\overline{x},\overline{\frac{\delta}{\delta x}}\}$
%be a basis of $H_3$ and
 $\{x^i\}_{i\in \mathbb{N}\cup \{0\}}$   the canonical
basis of $\mathbb{F}[x].$  By considering   $\psi_1(p(x) ,
[\overline{x}, {\frac{\overline{\delta}}{\delta x}}])$, it is
immediate to get that the first action $\psi_1$ makes of
$\mathbb{F}[x]$ an $H_3$-module if and only if
$\Omega=1_{\mathbb{F}[x]}$. As  consequence we have.

\begin{pr} The Leibniz algebras obtained from the first action $\psi_1$ are the same as those obtained in Theorem \ref{1}.
\end{pr}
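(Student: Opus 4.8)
The plan is to reduce the statement to the defining data of $HFL_3$ and then quote Theorem \ref{1}. The essential point has, in fact, already been recorded just before the statement: the compatibility of $\psi_1$ with the unique nonzero bracket $[\overline{x},\frac{\overline{\delta}}{\delta x}]=\overline{1}$ of $H_3$ is forced by the Leibniz identity. Writing the action as $p\cdot a:=[p,a]$ for $p\in\mathbb{F}[x]$ and $a\in H_3$, the identity $[p,[\overline{x},\frac{\overline{\delta}}{\delta x}]]=[[p,\overline{x}],\frac{\overline{\delta}}{\delta x}]-[[p,\frac{\overline{\delta}}{\delta x}],\overline{x}]$ yields, for every $p(x)\in\mathbb{F}[x]$,
$$\Omega(p(x))=\frac{\delta}{\delta x}\big(x\,p(x)\big)-x\,\frac{\delta}{\delta x}p(x)=p(x),$$
so that $\Omega=1_{\mathbb{F}[x]}$ is the only choice making $\psi_1$ an $H_3$-module.

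Substituting $\Omega=1_{\mathbb{F}[x]}$ back into the three defining assignments of $\psi_1$, they read $(p,\overline{1})\mapsto p$, $(p,\overline{x})\mapsto xp$ and $(p,\frac{\overline{\delta}}{\delta x})\mapsto \frac{\delta}{\delta x}p$. These coincide verbatim with the Fock action (\ref{Fo}) in the case $k=1$, i.e.\ for $H_3=H_{2\cdot 1+1}$. Hence the $H_3$-module $I=\mathbb{F}[x]$ produced by $\psi_1$ is precisely the Fock module, and therefore a Leibniz algebra built from this data is, by definition, a Heisenberg-Fock Leibniz algebra of type $HFL_3$. It then remains only to invoke Theorem \ref{1} with $n=3$ and $k=1$, which provides both the basis $\{\overline{1},\overline{x},\frac{\overline{\delta}}{\delta x},\,x^{t}\mid t\in\mathbb{N}\cup\{0\}\}$ and the explicit multiplication table, obtained there after the change of basis absorbing the undetermined polynomials $p_i,q_i,r$ and after the Leibniz-identity constraints kill the remaining structure constants. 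Since this is exactly the family of algebras arising from $\psi_1$, the two classes coincide.

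I expect no genuine obstacle: the whole argument rests on the already-verified equality $\Omega=1_{\mathbb{F}[x]}$ together with the syntactic match between $\psi_1$ (evaluated at $\Omega=1_{\mathbb{F}[x]}$) and (\ref{Fo}) for $k=1$. The only point deserving a line of care is checking that the one-variable specialization of Theorem \ref{1}, where the index range $1\leq i\leq k$ collapses to $i=1$, reproduces the $H_3$ table with no spurious products; this is immediate.
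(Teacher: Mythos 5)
Your argument is correct and is exactly the paper's: the authors also observe that applying the Leibniz identity to $\psi_1\big(p(x),[\overline{x},\frac{\overline{\delta}}{\delta x}]\big)$ forces $\Omega=1_{\mathbb{F}[x]}$, whence the action collapses to the Fock action (\ref{Fo}) with $k=1$ and Theorem \ref{1} applies. Your write-up merely makes explicit the computation $\Omega(p)=\frac{\delta}{\delta x}(xp)-x\frac{\delta}{\delta x}p=p$ that the paper leaves as "immediate."
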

%
%Consider the module $H_3 \times I$, as follows:
%$$\begin{array}{l}
%[x^i,\overline{1}]=a_i(x),\\{}
%[x^i,\overline{x}]=x^{i+1},\\{}
%[x^i,\frac {\overline{\delta}}{\delta x}]=ix^{i-1},
%\end{array}$$
%for $i\in \mathbb{N}.$
%Applying the Leibniz identity we obtain the Fock representation.
%Thus, we obtain that if $$[x^i,\overline{x}]=x^{i+1},\qquad
% [x^i,\frac {\overline{\delta}}{\delta x}]=ix^{i-1},$$ then there
% exist only Fock representation.

%\item

\medskip
Consider now the second action $\psi_2:\mathbb{F}[x] \times H_3
\rightarrow  \mathbb{F}[x]$.
%Then,  we have
%$$\begin{array}{ll}
%[x^i,\overline{1}]=x^i,&\\{} [x^i,\overline{x}]=\Omega(x^i),&\\{}
%[x^i,\frac {\overline{\delta}}{\delta x}]=ix^{i-1},& i\in
%\mathbb{N}.
%\end{array}$$

%The following result holds.
\begin{pr}\label{77} The  action $\psi_2$ makes of $\mathbb{F}[x]$ an
$H_3$-module if and only if
\begin{equation}\label{eq6} \Omega(x^i) = x^{i+1} + \sum\limits_{k=0}^i
c_k\Big(\begin{array}{c} i
\\ k \end{array}\Big) x^{i-k},\end{equation}
where $\{c_k \}_{k \in {\mathbb N} \cup \{0\}}$ is a fixed
sequence in $\mathbb{F}$ and $\Big(\begin{array}{c} i
\\ k \end{array}\Big)$ are binomial coefficients.
\end{pr}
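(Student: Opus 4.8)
The plan is to translate the requirement that $\psi_2$ be a module action into a condition on $\Omega$, and then solve the resulting operator equation. Recall that, by the Leibniz identity (as in the discussion preceding Equation (\ref{atleti})), the action $I\times L/I\to I$ is a module action precisely when $p\cdot[a,b]=(p\cdot a)\cdot b-(p\cdot b)\cdot a$ holds for all $p\in\mathbb{F}[x]$ and all $a,b$ in the basis $\{\overline{1},\overline{x},\frac{\overline{\delta}}{\delta x}\}$ of $H_3$. I would first dispose of the pairs involving $\overline{1}$ together with $(\overline{x},\overline{x})$ and $(\frac{\overline{\delta}}{\delta x},\frac{\overline{\delta}}{\delta x})$: since the corresponding bracket in $H_3$ vanishes and $\overline{1}$ acts as the identity under $\psi_2$, both sides reduce to $0$, so these impose no constraint. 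The only nontrivial relation is $[\overline{x},\frac{\overline{\delta}}{\delta x}]=\overline{1}$, which, writing $D:=\frac{\delta}{\delta x}$, becomes
$$D\big(\Omega(p)\big)-\Omega\big(D(p)\big)=p\qquad\text{for all }p\in\mathbb{F}[x],$$
that is, the commutator identity $[D,\Omega]=\id$ on $\mathbb{F}[x]$. Hence $\psi_2$ is a module action if and only if $\Omega$ satisfies this single equation.

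The second step is to solve $[D,\Omega]=\id$. Writing $M$ for the multiplication operator $p\mapsto xp$, the canonical relation $D(xp)-xD(p)=p$ says $[D,M]=\id$, so any solution $\Omega$ differs from $M$ by an operator commuting with $D$; that is, $\Theta:=\Omega-M$ obeys $[D,\Theta]=0$. It therefore suffices to classify the linear maps $\Theta$ on $\mathbb{F}[x]$ commuting with $D$ and then add $M$ back. I would determine such $\Theta$ inductively on the basis $\{x^i\}$: from $D\,\Theta(1)=\Theta(D1)=0$ one gets that $\Theta(1)=c_0$ is constant, and from $D\,\Theta(x^i)=i\,\Theta(x^{i-1})$ the polynomial $\Theta(x^i)$ is determined by $\Theta(x^{i-1})$ up to its constant term $c_i$. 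Antidifferentiating and using the identity $i\binom{i-1}{k}=(i-k)\binom{i}{k}$ then yields $\Theta(x^i)=\sum_{k=0}^{i}c_k\binom{i}{k}x^{i-k}$ for an arbitrary sequence $\{c_k\}\subset\mathbb{F}$. Adding $M$ gives exactly the asserted form (\ref{eq6}); conversely, the same binomial identity shows directly that every $\Omega$ of the form (\ref{eq6}) satisfies $[D,\Omega]=\id$, which is the sufficiency direction.

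The computational content is light, and I do not expect a genuine obstacle beyond two points of care. The first is recognizing that all the module constraints collapse to the single relation $[D,\Omega]=\id$, for which one must use that $\overline{1}$ acts as the identity so that the $\overline{1}$-pairs are vacuous. The second is the inductive bookkeeping of the constants: the free scalar $c_i$ introduced as the constant term of $\Theta(x^i)$ must be the same scalar appearing, with weight $\binom{j}{i}$, as the coefficient of $x^{j-i}$ in $\Theta(x^j)$ for every $j\ge i$. Verifying this coherence is precisely the content of the binomial identity above, and it is what forces the binomial coefficients in (\ref{eq6}) rather than arbitrary lower-triangular coefficients. Once this is settled, the equivalence with the module condition from the first paragraph completes the proof.
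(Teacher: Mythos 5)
Your proof is correct and follows essentially the same route as the paper's: both reduce the module condition to the single constraint coming from $[\overline{x},\frac{\overline{\delta}}{\delta x}]=\overline{1}$, namely $D\Omega-\Omega D=\mathrm{id}$ (the paper's Equation (\ref{eq7})), and then solve it degree by degree using the identity $(i-k)\binom{i}{k}=i\binom{i-1}{k}$. Your splitting $\Omega=M+\Theta$ with $[D,\Theta]=0$ is only a tidy repackaging of the paper's direct induction on $D(\Omega(x^i))=x^i+i\,\Omega(x^{i-1})$, though it does make the converse direction immediate.
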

\begin{proof}
Suppose $\mathbb{F}[x]$ is an $H_3$-module through the action
$\psi_2$. Then we have
$$x^i = [x^i,\overline{1}]= [x^i,[\overline{x},\frac {\overline{\delta}}{\delta x}]]=
[[x^i,\overline{x}],\frac {\overline{\delta}}{\delta x}] -
[[x^i,\frac {\overline{\delta}}{\delta x}],\overline{x}] =
[[x^i,\overline{x}],\frac {\overline{\delta}}{\delta x}] -
[ix^{i-1},\overline{x}]$$ and so
\begin{equation}\label{eq7}
[[x^i,\overline{x}],\frac {\overline{\delta}}{\delta x}] =x^i+
[ix^{i-1},\overline{x}].\end{equation}

Taking into account  Equation \eqref{eq7},   we can easily prove
by induction
 \eqref{eq6}.
 Indeed,  for $i=0$ we get from \eqref{eq7} that
$ [[1,\overline{x}],\frac {\overline{\delta}}{\delta x}] =1,$
which implies $ [1,\overline{x}] = x+c_0 = \Omega(1).$ For $i=1$
the same equation allows us to get $[[x,\overline{x}],\frac
{\overline{\delta}}{\delta x}] =x+ [1,\overline{x}] = 2x+c_0$ and
so $ [x,\overline{x}] = x^2+c_0x+c_1= \Omega(x).$

Let the induction hypothesis true for $i=j$ and we will show it
for $i=j+1.$ Taking into account \eqref{eq7}  we have
$$\begin{array}{lll}
[[x^{j+1},\overline{x}],\frac {\overline{\delta}}{\delta x}] &=& x^{j+1}+
[(j+1)x^{j},\overline{x}] = x^{j+1}+ (j+1)(x^{j+1} +
\sum\limits_{k=0}^j c_k\Big(\begin{array}{c} j
\\ k \end{array}\Big) x^{j-k})=\\{}
&=&(j+2)x^{j+1} + \sum\limits_{k=0}^j c_k(j+1)\Big(\begin{array}{c} j
\\ k \end{array}\Big) x^{j-k}=\\{}
\end{array}$$
$$\begin{array}{lll}
&=&(j+2)x^{j+1} + \sum\limits_{k=0}^j c_k(j+1) \frac{j!}{k!(j-k)!}
x^{j-k}=\\{} &=& (j+2)x^{j+1} + \sum\limits_{k=0}^j c_k
\frac{(j+1)!}{k!(j+1-k)!} (j+1-k) x^{j-k}.
\end{array}$$

From here
$$[x^{j+1},\overline{x}] = x^{j+2} + \sum\limits_{k=0}^{j} c_k \frac{(j+1)!}{k!(j+1-k)!} x^{j+1-k} + c_{j+1} =
x^{j+2} + \sum\limits_{k=0}^{j+1} c_k \Big(\begin{array}{c} j+1
\\ k \end{array}\Big) x^{j+1-k},$$
that is, $$\Omega(x^{j+1}) = x^{j+2} + \sum\limits_{k=0}^{j+1} c_k
\Big(\begin{array}{c} j+1
\\ k \end{array}\Big) x^{j+1-k}.$$

The converse is of immediate verification.
\end{proof}

\begin{pr}\label{type2}
Any  Leibniz algebra obtained from the second action $\psi_2$
admits a basis
$$\{\overline{1}, \overline{x},\frac
{\overline{\delta}}{\delta x}\} \dot{\cup} \{x^i: i\in \mathbb{N}
\cup \{0\} \}$$ in such a way that the multiplication table on
this basis has the form:
$$\begin{array}{lllll}
[x^i,\overline{1}]=x^i,& &
[x^i,\overline{x}]=\Omega(x^i), & & [x^i,\frac
{\overline{\delta}}{\delta x}]=ix^{i-1},\\[1mm]
[\overline{x},\frac {\overline{\delta}}{\delta x}]=\overline{1},& & [\frac {\overline{\delta}}{\delta
x},\overline{x}]=-\overline{1},  &  &
\end{array}$$
where the omitted products are equal to zero and $\Omega(x^i) $
satisfies Equation (\ref{eq6}).
\end{pr}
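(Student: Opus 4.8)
The plan is to reproduce the argument of Theorem \ref{1} in the special case $n=3$ (i.e. $k=1$), with the Fock action replaced by $\psi_2$. By Proposition \ref{77}, the hypothesis that $\psi_2$ turns $I=\mathbb{F}[x]$ into an $H_3$-module is precisely the requirement that $\Omega$ satisfy Equation (\ref{eq6}); I will use this only through the single fact that $[p,\overline{1}]=p$ for every $p\in I$. Writing $L=V\oplus I$ with $V=\langle \overline{1},\overline{x},\frac{\overline{\delta}}{\delta x}\rangle$, the products whose first factor lies in $I$ are prescribed by $\psi_2$, namely $[x^i,\overline{1}]=x^i$, $[x^i,\overline{x}]=\Omega(x^i)$ and $[x^i,\frac{\overline{\delta}}{\delta x}]=ix^{i-1}$, while $[L,I]=0$; in particular $[I,I]=0$. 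Thus the only products still to be pinned down are those among the three generators, and each of them equals its $H_3$-bracket plus an $I$-valued correction.

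First I would record $[\overline{x},\overline{1}]=p(x)$, $[\frac{\overline{\delta}}{\delta x},\overline{1}]=q(x)$, $[\overline{1},\overline{1}]=r(x)$ and carry out the change of basis $\overline{x}'=\overline{x}-p$, $\frac{\overline{\delta}}{\delta x}'=\frac{\overline{\delta}}{\delta x}-q$, $\overline{1}'=\overline{1}-r$. Using $[p,\overline{1}]=p$ together with $[L,I]=0$, exactly as in Theorem \ref{1}, these three products become zero in the new basis. One must also check that this change of basis leaves the $\psi_2$-action on $I$ intact: shifting a generator by an element of $I$ alters a product $[x^i,-]$ only by a bracket of two elements of $I$, which vanishes since $[I,I]=0$.

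After relabelling, the remaining unknown products are
$$[\overline{x},\overline{x}]=a,\quad [\tfrac{\overline{\delta}}{\delta x},\tfrac{\overline{\delta}}{\delta x}]=b,\quad [\overline{x},\tfrac{\overline{\delta}}{\delta x}]=\overline{1}+e,\quad [\tfrac{\overline{\delta}}{\delta x},\overline{x}]=-\overline{1}+f,\quad [\overline{1},\overline{x}]=h,\quad [\overline{1},\tfrac{\overline{\delta}}{\delta x}]=g,$$
with $a,b,e,f,g,h\in I$. The decisive step is to substitute the six triples $\{\overline{x},\overline{x},\overline{1}\}$, $\{\frac{\overline{\delta}}{\delta x},\frac{\overline{\delta}}{\delta x},\overline{1}\}$, $\{\overline{x},\frac{\overline{\delta}}{\delta x},\overline{1}\}$, $\{\frac{\overline{\delta}}{\delta x},\overline{x},\overline{1}\}$, $\{\overline{1},\overline{x},\overline{1}\}$, $\{\overline{1},\frac{\overline{\delta}}{\delta x},\overline{1}\}$ into the Leibniz identity $[[y,z],x]=[[y,x],z]+[y,[z,x]]$ with $x=\overline{1}$. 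On the left the correction polynomial is acted on by $\overline{1}$ and so reproduces itself, because $[c,\overline{1}]=c$ for $c\in I$ (and the Lie parts are absorbed via $[\overline{1}+e,\overline{1}]=e$, $[-\overline{1}+f,\overline{1}]=f$, using $[\overline{1},\overline{1}]=0$); on the right both summands vanish since every product with $\overline{1}$ is now zero. Hence $a=b=e=f=g=h=0$, giving exactly the asserted table.

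I do not anticipate a serious obstacle: the argument is a verbatim specialization of Theorem \ref{1}, and the key structural observation is that it is insensitive to the particular shape of $\Omega$, relying on the module structure only through $[p,\overline{1}]=p$. The one point deserving genuine attention is the compatibility of the preliminary change of basis with the prescribed $\psi_2$-action on $I$, which is precisely where $[L,I]=0$ does the work.
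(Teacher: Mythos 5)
Your proof is correct, and it reaches the same multiplication table by the same basic mechanism as the paper: absorb the $I$-valued corrections of the products $[\,\cdot\,,\overline{1}]$ into the generators by a change of basis, then use the Leibniz identity and the fact that $\overline{1}$ acts as the identity on $I$ to kill the remaining corrections. The organization differs slightly from the paper's own proof of this proposition: the paper first normalizes only $[\overline{x},\frac{\overline{\delta}}{\delta x}]=\overline{1}$ via $\overline{1}'=\overline{1}+q(x)$, then extracts nine constraints from a broader collection of triples (including ones such as $\{\overline{x},\overline{1},\overline{x}\}$ and $\{\frac{\overline{\delta}}{\delta x},\overline{x},\frac{\overline{\delta}}{\delta x}\}$ that do not end in $\overline{1}$), and only at the end performs the change of basis $\overline{x}'=\overline{x}-p$, $\frac{\overline{\delta}}{\delta x}'=\frac{\overline{\delta}}{\delta x}-r$, $\overline{1}'=\overline{1}-\frac{\delta}{\delta x}(p)$ — which, given the constraint $d=\frac{\delta}{\delta x}(p)$, is the same substitution you make up front. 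Your version is leaner: by doing the change of basis first you need only the six triples with $\overline{1}$ in the last slot, and your explicit remark that the whole argument uses the module structure only through $[p,\overline{1}]=p$ (so that it is insensitive to the shape of $\Omega$, which enters only through Proposition \ref{77}) is precisely the structural reason the proof of Theorem \ref{1} specializes; the paper leaves this implicit. Your checks that the substitution leaves the $\psi_2$-action on $I$ unchanged, via $[I,I]\subseteq[L,I]=0$, are also the right points to verify.
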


\begin{proof}
By Proposition \ref{77} we have the restriction on $\Omega(x^i)$.
On the other hand, we know
$$\begin{array}{lllll}
[x^i,\overline{1}]=x^i,& &[x^i,\overline{x}]=\Omega(x^i),& &
[x^i,\frac {\overline{\delta}}{\delta x}]=ix^{i-1},\\{}

[\overline{x},\overline{1}]=p(x),& &[\overline{x},\frac
{\overline{\delta}}{\delta x}]=\overline{1}+q(x),& &
[\overline{x},\overline{x}]=a(x),\\{}

 [\frac
{\overline{\delta}}{\delta x},\overline{1}]=r(x),& &
 [\frac
{\overline{\delta}}{\delta x},\frac {\overline{\delta}}{\delta
x}]=b(x),& &[\frac{\overline{\delta}}{\delta
x},\overline{x}]=-\overline{1}+s(x),\\{}

[\overline{1},\overline{x}]=c(x),& &
[\overline{1},\overline{1}]=d(x),& &[\overline{1},\frac
{\overline{\delta}}{\delta x}]=e(x).%,\\{}
\end{array}$$

By making the  change of basis $\overline{1}'=\overline{1}+q(x)$
we can suppose that $[\overline{x},\frac
{\overline{\delta}}{\delta x}]=\overline{1}.$

Now, from Leibniz identity we obtain the following equations:
$$\begin{array}{lll}
\mbox{Leibniz identity }& &\mbox{Constraint}\\
\hline \hline\\
\{\overline{1},\overline{1},\overline{1}\}&\Rightarrow& c(x)=[d(x),\overline{x}],\\
\{\overline{1},\overline{1},\frac {\overline{\delta}}{\delta x}\}&\Rightarrow&e(x)=\frac {{\delta}}{\delta x}(d(x)),\\
\{\overline{1},\overline{x},\frac {\overline{\delta}}{\delta x}\}&\Rightarrow&[e(x),\overline{x}]=\frac{\delta}{\delta x}(c(x))-d(x),\\
\{\overline{x},\overline{1},\overline{x}\}&\Rightarrow& a(x)=[p(x),\overline{x}],\\
\{\overline{x},\overline{1},\frac {\overline{\delta}}{\delta x}\}&\Rightarrow& d(x)=\frac {{\delta}}{\delta x}(p(x)),\\
\{\overline{x},\overline{x},\frac {\overline{\delta}}{\delta x}\}&\Rightarrow&p(x)+c(x)=\frac {{\delta}}{\delta x}(a(x)),\\
\{\frac {\overline{\delta}}{\delta x},\overline{1},\overline{x}\}&\Rightarrow&s(x)=d(x)+[r(x),\overline{x}],\\
\{\frac {\overline{\delta}}{\delta x},\overline{1},\frac {\overline{\delta}}{\delta x}\}&\Rightarrow& b(x)=\frac {{\delta}}{\delta x}(a(x)),\\
\{\frac {\overline{\delta}}{\delta x},\overline{x},\frac
{\overline{\delta}}{\delta x}\}&\Rightarrow&
[b(x),\overline{x}]=-e(x)-r(x)+\frac {{\delta}}{\delta x}(a(x)).
\end{array}$$

By  making the next change of basis:
$$\begin{array}{l}
\overline{1}'=\overline{1}-\frac {{\delta}}{\delta x}(p(x)),\\
\overline{x}'=\overline{x}-p(x),\\
\frac {\overline{\delta}}{\delta x}'=\frac
{\overline{\delta}}{\delta x}-r(x),
\end{array}$$
we obtain the family of the proposition.
\end{proof}

%\item
Finally  we consider the third action  $\psi_3:\mathbb{F}[x]
\times H_3 \rightarrow  \mathbb{F}[x]$,  being then
$$\begin{array}{ll}
[x^i,\overline{1}]=x^i,&\\{} [x^i,\overline{x}]=x^{i+1},\\{}
[x^i,\frac {\overline{\delta}}{\delta x}]=\Omega(x^i),&i\in
\mathbb{N} \cup \{0\}.
\end{array}$$

By arguing in a similar way to Propositions \ref{77} and
\ref{type2} we can prove the next results.

\begin{pr} The  action $\psi_3$ makes of $\mathbb{F}[x]$ an
$H_3$-module if and only if
\begin{equation}\label{eq8}  \Omega(x^i) = ix^{i-1} + x^ic(x).\end{equation}
for a fixed $c(x) \in \mathbb{F}[x]$ and $i \in \mathbb{N} \cup
\{0\}$.
\end{pr}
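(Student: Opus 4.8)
The plan is to follow the proof of Proposition~\ref{77}: I would reduce the module axiom to a single recurrence for $\Omega$ and then solve that recurrence by induction on $i$.

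First I would observe that, since $\overline{1}$ is central in $H_3$ and acts as the identity under $\psi_3$ (because $[x^i,\overline{1}]=x^i$), the requirement that $\psi_3$ turn $\mathbb{F}[x]$ into an $H_3$-module collapses to the single relation coming from the only nonzero bracket $[\overline{x},\frac{\overline{\delta}}{\delta x}]=\overline{1}$. Concretely, for every $i$ the Leibniz identity gives
$$x^i=[x^i,\overline{1}]=[x^i,[\overline{x},\frac{\overline{\delta}}{\delta x}]]=[[x^i,\overline{x}],\frac{\overline{\delta}}{\delta x}]-[[x^i,\frac{\overline{\delta}}{\delta x}],\overline{x}],$$
while every other pair of basis elements of $H_3$ produces a trivial identity: the pairs with zero bracket force only that the corresponding operators commute, which holds automatically here since $\overline{1}$ acts as the identity and a basis element always commutes with itself.

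Next I would insert the defining rules of $\psi_3$, namely $[x^i,\overline{x}]=x^{i+1}$ and $[x^i,\frac{\overline{\delta}}{\delta x}]=\Omega(x^i)$, together with the fact that the action of $\overline{x}$ is multiplication by $x$, so that $[\Omega(x^i),\overline{x}]=x\,\Omega(x^i)$. The displayed relation then becomes the recurrence
$$\Omega(x^{i+1})=x\,\Omega(x^i)+x^i,\qquad i\in\mathbb{N}\cup\{0\}.$$
I would solve it by induction, taking $c(x):=\Omega(1)$ as the free parameter. The base case $i=0$ gives $\Omega(x)=1+xc(x)$, which agrees with \eqref{eq8}; and assuming $\Omega(x^i)=ix^{i-1}+x^ic(x)$, the recurrence yields $\Omega(x^{i+1})=x(ix^{i-1}+x^ic(x))+x^i=(i+1)x^i+x^{i+1}c(x)$, which is exactly \eqref{eq8} for $i+1$.

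For the converse I would simply note that the recurrence is equivalent to the module axiom, so any $\Omega$ prescribed by \eqref{eq8} does define an $H_3$-module; this is the immediate-verification half. I do not expect any genuine obstacle: the computations are all routine, and the only step deserving a line of care is the reduction above, namely confirming that no pair other than $(\overline{x},\frac{\overline{\delta}}{\delta x})$ constrains $\Omega$.
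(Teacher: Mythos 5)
Your proof is correct and follows exactly the route the paper intends: the paper gives no explicit argument here, stating only that one argues ``in a similar way to Propositions~\ref{77} and~\ref{type2},'' and your reduction of the module axiom to the single recurrence $\Omega(x^{i+1})=x\,\Omega(x^i)+x^i$ coming from $[\overline{x},\frac{\overline{\delta}}{\delta x}]=\overline{1}$, followed by induction with $c(x)=\Omega(1)$, is precisely that analogue. Your added remark that the zero-bracket pairs impose no constraint (since $\overline{1}$ acts as the identity) correctly justifies both the reduction and the converse.
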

%From the Leibniz identity
%$$x^i = [x^i,\overline{1}]= [x^i,[\overline{x},\frac {\overline{\delta}}{\delta x}]]=
%[[x^i,\overline{x},]\frac {\overline{\delta}}{\delta x}] -
%[[x^i,\frac {\overline{\delta}}{\delta x}]\overline{x}] =
%[x^{i+1},\frac {\overline{\delta}}{\delta x}] -
%[[x^{i},\frac {\overline{\delta}}{\delta x}],\overline{x}],$$ we have
%\begin{equation}\label{eq4}
%[x^{i+1},\frac {\overline{\delta}}{\delta x}]  =x^i+
%x[x^{i},\frac {\overline{\delta}}{\delta x}].\end{equation}
%
%Using the equation \eqref{eq4} by induction we can easily prove
% \eqref{eq3} .
% Indeed, from \eqref{eq4} for $i=0$ we get
%$$ [x,\frac {\overline{\delta}}{\delta x}] = 1+x[1,\frac {\overline{\delta}}{\delta x}] = 1+xc(x).$$
%And for $i=1$ we get $$[x^2,\frac {\overline{\delta}}{\delta x}]=x+ x[x,\overline{x}] = x+x(1+xc(x)) = 2x+x^2c(x),$$
%
%Let the induction hypothesis true for $i=j$ and we will show it
%for $i=j+1.$ Then from \eqref{eq4} for $i=j+1$ we have
%$$[x^{j+1},\frac {\overline{\delta}}{\delta x}]  =x^j+
%x[x^{j},\frac {\overline{\delta}}{\delta x}] = x^j+
%x(jx^{j-1} + x^jc(x)) = (j+1)x^j +x^{j+1}c(x)$$
%\end{itemize}

\begin{pr}
Any  Leibniz algebra obtained from the third action $\psi_3$
admits a basis
$$\{\overline{1}, \overline{x},\frac
{\overline{\delta}}{\delta x}\} \dot{\cup} \{x^i: i\in \mathbb{N}
\cup \{0\} \}$$ in such a way that the multiplication table on
this basis has the form:
$$\begin{array}{lllll}
[x^i,\overline{1}]=x^i,& &
[x^i,\overline{x}]=x^{i+1}, & & [x^i,\frac
{\overline{\delta}}{\delta x}]=\Omega(x^i),\\[1mm]
 [\overline{x},\frac {\overline{\delta}}{\delta x}]=\overline{1},& &[\frac {\overline{\delta}}{\delta
x},\overline{x}]=-\overline{1},  & &
\end{array}$$
where the omitted products are equal to zero and $\Omega(x^i)$
satisfies  Equation (\ref{eq8}).
\end{pr}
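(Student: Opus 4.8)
The plan is to reproduce the gauge-fixing argument of Proposition~\ref{type2}, the only structural difference being that for $\psi_3$ the twist $\Omega$ is carried by $\frac{\overline{\delta}}{\delta x}$ rather than by $\overline{x}$. First I would invoke the preceding proposition, so that \eqref{eq8}, i.e. $\Omega(x^i)=ix^{i-1}+x^ic(x)$, is already available; this pins down the three mixed products $[x^i,\overline{1}]=x^i$, $[x^i,\overline{x}]=x^{i+1}$ and $[x^i,\frac{\overline{\delta}}{\delta x}]=\Omega(x^i)$. Since $L/I\cong H_3$, the remaining unknowns are the nine brackets of $\overline{1},\overline{x},\frac{\overline{\delta}}{\delta x}$ among themselves, which equal their Heisenberg values modulo $I$; I would name the $I$-parts $[\overline{x},\overline{1}]=p$, $[\overline{x},\overline{x}]=a$, $[\overline{x},\frac{\overline{\delta}}{\delta x}]=\overline{1}+q$, $[\frac{\overline{\delta}}{\delta x},\overline{1}]=r$, $[\frac{\overline{\delta}}{\delta x},\overline{x}]=-\overline{1}+s$, $[\frac{\overline{\delta}}{\delta x},\frac{\overline{\delta}}{\delta x}]=b$, $[\overline{1},\overline{1}]=d$, $[\overline{1},\overline{x}]=c_1$ and $[\overline{1},\frac{\overline{\delta}}{\delta x}]=e$, all in $\mathbb{F}[x]$ (I write $c_1$ to avoid collision with the $c(x)$ of \eqref{eq8}).

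The observation that makes everything work is that every change of basis I shall use has the form $v\mapsto v+\omega(x)$ with $\omega\in\mathbb{F}[x]=I$; because $[L,I]=0$ we have $[x^i,\omega(x)]=0$, so no such shift ever perturbs the three mixed products, and the entire argument is a gauge fixing of the $H_3$-part alone. I would first set $\overline{1}\mapsto\overline{1}+q(x)$ to normalise $[\overline{x},\frac{\overline{\delta}}{\delta x}]=\overline{1}$, and then impose the Leibniz identity on the same nine triples used in Proposition~\ref{type2}, namely $\{\overline{1},\overline{1},\overline{x}\}$, $\{\overline{1},\overline{1},\frac{\overline{\delta}}{\delta x}\}$, $\{\overline{1},\overline{x},\frac{\overline{\delta}}{\delta x}\}$, $\{\overline{x},\overline{1},\overline{x}\}$, $\{\overline{x},\overline{1},\frac{\overline{\delta}}{\delta x}\}$, $\{\overline{x},\overline{x},\frac{\overline{\delta}}{\delta x}\}$, $\{\frac{\overline{\delta}}{\delta x},\overline{1},\overline{x}\}$, $\{\frac{\overline{\delta}}{\delta x},\overline{1},\frac{\overline{\delta}}{\delta x}\}$ and $\{\frac{\overline{\delta}}{\delta x},\overline{x},\frac{\overline{\delta}}{\delta x}\}$. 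Evaluating each triple by reducing the inner bracket modulo $I$ and then applying the known action expresses $a,b,c_1,d,e,s$ through $p$, $r$ and the two module operators ``multiply by $x$'' and $\Omega$.

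I would then clear the survivors with the coordinated substitution $\overline{x}\mapsto\overline{x}-p(x)$, $\frac{\overline{\delta}}{\delta x}\mapsto\frac{\overline{\delta}}{\delta x}-r(x)$ and a concluding $\overline{1}\mapsto\overline{1}-\Omega(p(x))$. Using $[L,I]=0$ (which kills the $V\times I$ brackets) and the module law (which returns the action on the $I\times V$ brackets), these produce $[\overline{x},\frac{\overline{\delta}}{\delta x}]\mapsto\overline{1}-\Omega(p(x))$ and $[\frac{\overline{\delta}}{\delta x},\overline{x}]\mapsto-\overline{1}+s(x)-x\,r(x)$, so the normalisation closes up precisely when $s(x)-x\,r(x)=\Omega(p(x))$ while, simultaneously, $a,b,c_1,d,e$ are forced to vanish. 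I expect the main obstacle to be exactly this closing verification: one must check that the Leibniz constraints of the previous step make a single coordinated shift capable of removing all nine extraneous polynomials at once, without the $\overline{x}$-shift re-creating a term in $[\overline{x},\frac{\overline{\delta}}{\delta x}]$ or the $\frac{\overline{\delta}}{\delta x}$-shift spoiling $[\frac{\overline{\delta}}{\delta x},\overline{x}]$. This compatibility is not formal; it is the analogue of the relations $d=\frac{\delta}{\delta x}(p)$ and $p+c=\frac{\delta}{\delta x}(a)$ of Proposition~\ref{type2}, now read through $\Omega$ instead of the plain derivative, and it is here that \eqref{eq8} does genuine work, since it is the module condition that forces the two operators to interact consistently and thereby guarantees that the required shifts exist.
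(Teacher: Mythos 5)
Your proposal is correct and is exactly the argument the paper intends: the paper gives no separate proof for this proposition, stating only that one argues ``in a similar way'' to Propositions \ref{77} and \ref{type2}, and your write-up is precisely that analogue carried out, with the derivative replaced by $\Omega$ in the Leibniz constraints ($d=\Omega(p)$, $b=\Omega(r)$, $s=d+xr$, etc.) and in the final change of basis $\overline{1}'=\overline{1}-\Omega(p)$, $\overline{x}'=\overline{x}-p$, $\frac{\overline{\delta}}{\delta x}'=\frac{\overline{\delta}}{\delta x}-r$. The compatibility you flag at the end indeed holds because \eqref{eq8} gives $\Omega(xf)=f+x\Omega(f)$, which is what makes the redundant constraints (e.g.\ $c_1+p=\Omega(a)$ with $a=xp$) automatic.
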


\medskip

%%%%%%%%%%%%%%%%%%%%%%%%%%%%%%%%%%%%%%%%%%%%%%%%%%%%%
\section{Leibniz algebras of minimal faithful representation-Heisenberg type}

\subsection{General case.}
Let $H_{2m+1}$ be a Heisenberg algebra of dimension $2m+ 1,$ then
it is well-known that  its minimal faithful representations have
dimension $m + 2$ , (see \cite{Burde}). From now on, for a more
comfortable notation,  we will denote by $$\{x_1, x_2, \dots, x_m,
y_1, y_2, \dots, y_m, z\}$$ the standard basis of
 $H_{2m+1},$ (see Equation (\ref{base1})), where the non-zero products are
 $$[y_i,x_i]=-[x_i,y_i]=z.$$

 By \cite{Corwin}, we can take as   minimal faithful
representation the linear mapping  $$\varphi: H_{2m+1} \rightarrow
{\rm End}(I),$$ where $I$ is an $(m+2)$-dimensional linear space
with a fixed basis $\{e_1, e_2, \dots, e_{m+2}\}$, determined  by
$$\begin{array}{ll}
\varphi(x_i) = E_{1,i+1}& 1 \leq i \leq m,\\
\varphi(y_i) = E_{i+1,m+2}& 1 \leq i \leq m,\\
\varphi(z) = E_{1,m+2}.&
\end{array}$$
Here $E_{i,j}$ denotes the elemental matrix with $1$ in the
$(i,j)$ slot and $0$ in the remaining places and we have
$\varphi([x,y])(e)= \varphi(y)\big(\varphi(x) (e)\big) -
\varphi(x)\big(\varphi(y) (e)\big)$ for any  $x,y \in H_{2m+1}$
 and  $e \in I.$  Observe that $H_{2m+1}$
corresponds to the $(m+2) \times (m+2)$ matrices
$$\left(\begin{matrix}0&a_2&a_3&\dots&a_{m+1}&c\\0&0&0&\dots&0&b_{2}\\0&0&0&\dots&0&b_{3}\\
\vdots&\vdots&\vdots&\vdots&\vdots&\vdots\\0&0&0&\dots&0&b_{m+1}\\0&0&0&\dots&0&0\end{matrix}\right).$$

This representation makes of $I$ an $H_{2m+1}$-module under the
action

\begin{equation}\label{final}
\begin{array}{lllll}
\phi:&I \times H_{2m+1} & \to & I &\\
&  (e_{i+1},x_i) & \mapsto & e_{1}, & 1 \leq i \leq m,\\
&(e_{m+2}, y_i)& \mapsto &e_{i+1}, & 1 \leq i \leq m,\\
&(e_{m+2}, z )& \mapsto &e_1,&
\end{array}
\end{equation}
 being zero the remaining products among the bases elements in the action.

\smallskip

In this section we are going to study  the Leibniz algebras
$(L,[\cdot, \cdot])$ satisfying  that $L/ I \cong H_{2m+1}$ and
where the $H_{2m+1}$-module $I$ is isomorphic to the  minimal
faithful representation $(I, \phi)$. From the above,
 ${\rm dim} L = 3m+3$ and $\{ x_1, x_2, \dots, x_m, y_1, y_2, \dots, y_m, z,e_1, e_2,
\dots, e_{m+2}\}$ is  a basis of $L.$ We also have
$$\begin{array}{ll}
[e_{i+1}, x_i] = e_1, & 1 \leq i \leq m,\\{} [e_{m+2},y_i] =
e_{i+1}, & 1 \leq i \leq m,\\{} [e_{m+2}, z] = e_1.&
\end{array}$$

\begin{teo}\label{general}
Let $L$ be a Leibniz algebra such that $L/ I \cong H_{2m+1}$
($m\neq 1$) and $I$ is the $L/ I$-module with the
minimal faithful representation given by Equation (\ref{final}).
Then $L$ admits a basis
$$\{ x_1, x_2, \dots, x_m, y_1, y_2,
\dots, y_m, z,e_1, e_2, \dots, e_{m+2}\}$$ in such a way that the
multiplications table on this basis has the form
$$\begin{array}{ll}
[e_{i+1}, x_i] = e_1, & [e_{m+2},y_i] = e_{i+1},\\[2mm]
[e_{m+2}, z] = e_1,&[x_i,x_j]=\sum\limits_{s=1}^{m+1} \alpha_{i,j}^s e_s,\\[2mm]
[x_i,y_j]=\gamma_{i,j}  e_1, \ \ i\neq j,&[x_i,y_i]=-z+\delta_i  e_1+\tau  e_2+\sum\limits_{s=2}^{m} \nu_{1,s}^2 e_{s+1},\\[2mm]
[y_i,y_j]=\beta_{i,j}  e_1, & [y_1,x_1]=z,\\[2mm]
[y_i,x_j]=\sum\limits_{s=1}^{m+1} \nu_{i,j}^s e_s, \ \ i\neq j,&
[y_i,x_i]=z+(\nu_{i,1}^{i+1}-\tau )e_2+\varepsilon_i^{i+1}
e_{i+1}+\sum\limits_{s=2\, s\neq i}^{m}
(\nu_{i,s}^{i+1}-\nu_{1,s}^2) e_{s+1},
\ i\neq 1,\\[2mm]
[z,x_1]=  \tau  e_1, & [z,x_i]=  \nu_{1,i}^2 e_1,\ \  i\neq 1,%\\[2mm]
\end{array}$$
for  $1\leq i,j\leq m$,  where any $\alpha_{p,q}^r, \gamma_{p,q},
\delta_p, \tau, \nu_{p,q}^r , \beta_{p,q},
 \varepsilon_p^r \in
{\mathbb F}$ and where the omitted products are equal to zero.
\end{teo}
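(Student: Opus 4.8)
The plan is to exploit the vector space decomposition $L=V\oplus I$ of Equation (\ref{atleti}), where $V=\mathrm{span}\{x_1,\dots,x_m,y_1,\dots,y_m,z\}$ consists of fixed lifts of the Heisenberg basis and $I=\mathrm{span}\{e_1,\dots,e_{m+2}\}$. Since $[L,I]=0$, every product having an element of $I$ in its second argument vanishes, whereas the products $[e_s,v]$ with $v\in V$ are prescribed by the module action $\phi$ of Equation (\ref{final}); explicitly, for $i=\sum_{s}\lambda_s e_s\in I$ one has $[i,x_j]=\lambda_{j+1}e_1$, $[i,y_j]=\lambda_{m+2}e_{j+1}$ and $[i,z]=\lambda_{m+2}e_1$. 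Consequently the only undetermined brackets are the $[a,b]$ with $a,b\in V$, and each such bracket equals its Heisenberg value in $L/I$ (one of $0$, $z$, $-z$) plus an a priori arbitrary element of $I$. First I would record this general ansatz, introducing one unknown scalar for each coefficient of $e_1,\dots,e_{m+2}$ in every product $[x_i,x_j]$, $[y_i,y_j]$, $[x_i,y_j]$, $[y_i,x_j]$, $[z,x_j]$, $[z,y_j]$, $[x_j,z]$, $[y_j,z]$ and $[z,z]$.

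The heart of the argument is to impose the Leibniz identity $[[a,b],c]=[[a,c],b]+[a,[b,c]]$ on triples $a,b,c\in V$ and to extract the linear relations among these scalars. The decisive simplification is that, writing $[a,b]=P_{ab}+i_{ab}$ with Heisenberg part $P_{ab}\in\{0,\pm z\}$ and $i_{ab}\in I$, the identity collapses to
$$[P_{ab},c]+\phi(i_{ab},c)=[P_{ac},b]+\phi(i_{ac},b)+[a,P_{bc}],$$
since the term $[a,i_{bc}]$ vanishes by $[L,I]=0$. Here each $[P,\cdot]$ is a scalar multiple of one of the still-unknown brackets $[z,x_j]$, $[z,y_j]$, $[x_j,z]$, $[y_j,z]$, $[z,z]$, and $\phi$ is the explicit action above. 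Exhausting the triples first determines these $z$-brackets: for example the triple $\{y_i,x_i,x_j\}$ with $i\neq j$ forces $[z,x_j]\in\mathbb{F}e_1$ (because $\phi(\cdot,x_j)\in\mathbb{F}e_1$) and expresses its coefficient through the $I$-parts of $[y_i,x_j]$ and $[y_i,x_i]$, while analogous triples give $[z,y_j]=[x_j,z]=[y_j,z]=[z,z]=0$. Substituting these back and processing the remaining triples yields the relations among the $\alpha,\beta,\gamma,\nu$ coefficients and, in particular, forces certain coefficients to be independent of the index $i$; this is precisely the origin of the single parameter $\tau$ and of the common coefficients $\nu_{1,s}^2$ occurring simultaneously in $[x_i,y_i]$, $[y_i,x_i]$ and $[z,x_1]$.

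Finally I would normalize the surviving parameters by changes of basis of the form $x_j\mapsto x_j+c_j$, $y_j\mapsto y_j+d_j$, $z\mapsto z+c$ with $c_j,d_j,c\in I$. Such transformations fix $L/I$ and, because $[e_s,I]=0$, leave the action (\ref{final}) unchanged, so they are admissible; their sole effect is to shift the $I$-parts $i_{ab}$ by the explicitly computable terms $\phi(c_j,\cdot)$, $\phi(d_j,\cdot)$ and $\phi(c,\cdot)$. Choosing the $I$-components of the corrections suitably then removes the redundant scalars: for instance the replacement $z\mapsto z+i_{y_1x_1}$ normalizes $[y_1,x_1]$ to exactly $z$, and adjusting the $e_{i+1}$-component of $d_i$ kills the $e_1$-entry of $[y_i,x_i]$. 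Carrying this out for all corrections leaves precisely the table displayed in the statement.

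The main obstacle is bookkeeping rather than conceptual difficulty: the number of triples and of unknown scalars is large, and since the action (\ref{final}) is strongly asymmetric --- $x_j$ sends only $e_{j+1}$ to $e_1$, while $y_j$ and $z$ act only on $e_{m+2}$ --- the constraints are not uniform and demand case splits according to whether indices coincide and whether $s\in\{1,2\}$ or $s>2$. The hypothesis $m\neq 1$ enters here: it guarantees that $e_2,\dots,e_{m+1}$ are genuinely distinct from $e_1$ and that index ranges such as $2\le s\le m$ with $s\neq i$ are nonvacuous, so that the degenerate case $m=1$ (handled separately) does not collapse the parametrization. The one point requiring real care is to check that the relations obtained are mutually consistent on every remaining triple before performing the final normalization.
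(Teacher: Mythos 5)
Your proposal follows essentially the same route as the paper: write each bracket of lifted generators as its Heisenberg part plus an undetermined element of $I$, impose the Leibniz identity on all triples of generators (using $[L,I]=0$ to collapse one term, exactly as in the paper's computation), and then normalize the surviving parameters by changes of basis of the form $v\mapsto v+i$ with $i\in I$. One small correction of detail: the identities alone only force $[x_j,z]$ and $[y_j,z]$ into $\mathbb{F}e_1$ rather than to zero; their vanishing comes from the $e_{m+2}$-components of the final basis change (the paper uses $x_i'=x_i-\eta_i^1e_{m+2}$, $y_j'=y_j-\varepsilon_j^1e_{j+1}-\theta_j^1e_{m+2}$), which your normalization step does accommodate.
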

\begin{proof}
We consider the following products:
$$[y_i,x_i]=z+\sum\limits_{k=1}^{m+2} \varepsilon_{i}^k e_k, \ \ \ \ 1
\leq i\leq m.$$
Putting $z'=z+\sum\limits_{k=1}^{m+2}
\varepsilon_{1}^k e_k$ we can assume $[y_1,x_1]=z.$ Thus, we
have
$$\begin{array}{lll}
[e_{i+1}, x_i] = e_1, & [e_{m+2},y_i] = e_{i+1}, & [e_{m+2}, z] = e_1,\\[3mm]
[x_i,x_j]=\sum\limits_{k=1}^{m+2} \alpha_{i,j}^k e_k, &[x_i,y_j]=\sum\limits_{k=1}^{m+2} \gamma_{i,j}^k e_k,\ \ i\neq j &
[x_i,y_i]=-z+\sum\limits_{k=1}^{m+2} \delta_{i}^k e_k,\\[3mm]
[x_i,z]=\sum\limits_{k=1}^{m+2} \eta_{i}^k e_k,& [y_i,y_j]=\sum\limits_{k=1}^{m+2} \beta_{i,j}^k e_k,&
 [y_i,x_j]=\sum\limits_{k=1}^{m+2} \nu_{i,j}^k e_k,\ \ i\neq j  ,
\\[3mm]
[y_i,z]=\sum\limits_{k=1}^{m+2} \theta_{i}^k e_k,&[y_1,x_1]=z,&[y_i,x_i]=z+\sum\limits_{k=1}^{m+2} \varepsilon_{i}^k e_k,\ \ i\neq 1,\\[3mm]
[z,x_i]=\sum\limits_{k=1}^{m+2} \tau_{i}^k e_k,
&[z,y_i]=\sum\limits_{k=1}^{m+2} \lambda_{i}^k e_k,&
[z,z]=\sum\limits_{k=1}^{m+2} \mu^k e_k,
\end{array}$$
with $1\leq i,j\leq m.$

We compute all Leibniz identities  using the software Mathematica
and we get  the following restrictions:
$$\begin{array}{llll}
\mbox{ Leibniz identity }& & \mbox{ Constraint }&\\[1mm]
\hline \hline\\
\{z,z,y_k\}&\Rightarrow& \mu^{m+2}=\lambda_k^{m+2}=0,&1\leq k\leq m,\\[2mm]
\{z,z,x_k\}&\Rightarrow& \mu^{k+1}=\tau_k^{m+2}=0,&1\leq k\leq m,\\[2mm]
\{z,y_j,x_k\}&\Rightarrow&  \lambda_{j}^{k+1}=0, \ \mu^1=\lambda_1^2=\lambda_j^{j+1},&1\leq j,k\leq m,\ j\neq k,\\[2mm]
\{z,x_j,x_k\}&\Rightarrow&\tau_j^{k+1}=\tau_k^{j+1},&1\leq j,k\leq m,\\[2mm]
\{y_i,z,y_k\}&\Rightarrow& \theta_i^{m+2}=\beta_{i,k}^{m+2}=0,&1\leq i,j,k\leq m,\\[2mm]
\{y_i,z,x_k\}&\Rightarrow& \nu_{i,k}^{m+2}=\theta_{i}^{k+1},&1\leq i,k\leq m,\ i\neq k,\\[2mm]
&\Rightarrow& \theta_{i}^{i+1}-\mu^1=0,\ \mu^1=\theta_1^2,&1\leq
i\leq m,\ k= i,\\[2mm]
\{y_i,y_j,x_k\}&\Rightarrow &\beta_{i,j}^{k+1}=\nu_{i,k}^{m+2}=0, & 1 \leq i,j,k \leq m,\ j\neq k\neq i,\\[2mm]
&\Rightarrow &\theta_i^1=\beta_{i,j}^{j+1},\ \theta_i^s=0,&1\leq i,j\leq m,\ k=j,\ i\neq j,\ 2\leq s\leq m+1,\\[2mm]
&\Rightarrow& \beta_{i,j}^{i+1}=\lambda_j^1,\ -\lambda_j^{j+1}-\varepsilon_i^{m+2}=0,&1\leq i,j\leq m,\ i\neq 1,\\[2mm]
&\Rightarrow&\lambda_j^{j+1}=0,&i=1,\ j\neq 1,\\[2mm]
&\Rightarrow&\theta_1^{s}=0,&3\leq s\leq m+1,\ i=j=k=1,\\[2mm]
\{y_i,x_i,y_i\}&\Rightarrow& \theta_i^1=\beta_{i,i}^{i+1},&1\leq
i\leq m,\\[2mm]
%\end{array}$$
%
%$$\begin{array}{llll}
%\mbox{ Leibniz identity }& & \mbox{ Constraint }&\\[1mm]
%\hline \hline\\

%\{y_i,x_i,z\}&\Rightarrow&\theta_i^{i+1}=0,&1\leq i\leq m,\\[2mm]

\{y_i,x_j,x_k\}&\Rightarrow &\nu_{i,j}^{k+1}=\nu_{i,k}^{j+1},&1\leq i,j,k\leq m, \ j\neq i\neq k,\\[2mm]
&\Rightarrow & \tau_k^s=0,\ \tau_k^1+\varepsilon_i^{k+1}-\nu_{i,k}^{i+1}=0,&2\leq s\leq m+1,\ 1\leq i,k\leq m, \ j= i\neq k,\\[2mm]
&\Rightarrow & \tau_j^1=\nu_{1,j}^2,&1\leq j\leq m,\ i=k=1,\ j\neq 1,\\[2mm]
\{x_i,z,y_k\}&\Rightarrow& \eta_i^{m+2}=\gamma_{i,k}^{m+2}=0,&1\leq i,k\leq m,\ i\neq k,\\[2mm]
&\Rightarrow& \delta_{i}^{m+2}=0,&1\leq i\leq m, \ i=k,\\[2mm]
\{x_i,z,x_k\}&\Rightarrow& \eta_i^{k+1}=\alpha_{i,k}^{m+2},&1\leq i,k\leq m,\\[2mm]
\{x_i,y_i,y_k\}&\Rightarrow &\lambda_{k}^1=0,&1\leq k\leq m,\\[2mm]
\{x_i,y_j,y_k\}&\Rightarrow &\gamma_{i,j}^{k+1}=\alpha_{i,k}^{m+2}=0,&1\leq i,j, k\leq m,\ i\neq j\neq k\\[2mm]
&\Rightarrow &-\tau_k^1+\delta_i^{k+1}=0,&1\leq i, k\leq m,\ j=i\neq k,\\[2mm]
&\Rightarrow &\gamma_{i,j}^{i+1}=0,&1\leq ij\leq m,\ k=j\neq i,\\[2mm]
&\Rightarrow & \eta_i^1=-\tau_i^1+\delta_i^{i+1},\ \eta_{i}^s=0,&2\leq s\leq m,\ 1\leq i\leq m,\ j=k=i,\\[2mm]
\{x_i,x_j,y_k\}&\Rightarrow & \gamma_{i,j}^k=0,&1\leq i,j,k\leq m, \ i\neq k\neq j,\\[2mm]
&\Rightarrow & \eta _i^1=\gamma_{i,j}^{j+1},&1\leq i,j\leq m, \ k=j\neq i,\\[2mm]
%&\Rightarrow & \tau_j^1=\delta_i^{j+1},& 1\leq i,k\leq m,\ k=i\neq j,\\[2mm]
%&\Rightarrow & -\eta_i^1=\tau_i^1-\delta_i^{i+1},&1\leq i\leq m,\ j=k=i,\\[2mm]
\{x_i,x_j,x_k\}&\Rightarrow &\alpha_{i,j}^{k+1}=\alpha_{i,k}^{j+1}, & 1 \leq i,j,k \leq m.%\\[1mm]
\end{array}$$

\vspace*{0.3cm}

From here,
$$\begin{array}{ll}
[e_{i+1}, x_i] = e_1, & 1 \leq i \leq m,\\[2mm]
[e_{m+2},y_i] = e_{i+1}, & 1 \leq i \leq m,\\[2mm]
[e_{m+2}, z] = e_1,&\\[2mm]
[x_i,x_j]=\sum\limits_{s=1}^{m+1} \alpha_{i,j}^s e_s, & 1 \leq i, j \leq m,\\[2mm]
[y_i,y_j]=\beta_{i,j}^1 e_1+\theta_i^1 e_{j+1}, & 1 \leq i, j \leq m,%\\[2mm]

\end{array}$$
$$\begin{array}{ll}
[x_i,y_j]=\gamma_{i,j}^1 e_1+\eta_i^1 e_{j+1}, & 1 \leq i, j\leq m,\ i\neq j\\[2mm]

[x_1,y_1]=-z+\delta_1^1 e_1+(\eta_1^1+\tau_1^1) e_{2}+\sum\limits_{s=2}^{m} \nu_{1,s}^2 e_{s+1}, & 1 \leq i\leq m,\\[2mm]
[x_i,y_i]=-z+\delta_i^1 e_1+\tau_1^1 e_2+(\eta_i^1+\nu_{1,i}^2) e_{i+1}+\sum\limits_{s=2\, s\neq i}^{m} \nu_{1,s}^2 e_{s+1}, & 2 \leq i\leq m,\\[2mm]
[y_1,x_1]=z,& \\[2mm]
[y_i,x_i]=z+\varepsilon_i^1e_1+(\nu_{i,1}^{i+1}-\tau_1^1)e_2+\varepsilon_i^{i+1} e_{i+1}+\sum\limits_{s=2\, s\neq i}^{m} (\nu_{i,s}^{i+1}-\nu_{1,s}^2) e_{s+1}, & 2 \leq i\leq m,\\[2mm]
[y_i,x_j]=\sum\limits_{s=1}^{m+1} \nu_{i,j}^s e_s, & 1 \leq i, j  \leq m,\ i\neq j\\[2mm]
[x_i,z]=  \eta_{i}^1 e_1, & 1 \leq i \leq m,\\[2mm]
[y_i,z]= \theta_{i}^1 e_1, & 1 \leq i \leq m,\\[2mm]
[z,x_1]=  \tau_{1}^1 e_1, & \\[2mm]
[z,x_i]=  \nu_{1,i}^2 e_1, & 2 \leq i \leq m,%\\[2mm]
\end{array}$$
with the following restrictions
$$\begin{array}{ll}
\alpha_{i,j}^{k+1}=\alpha_{i,k}^{j+1}, & 1 \leq i,j,k \leq m,\\[1mm]
\nu_{i,j}^{k+1}=\nu_{i,k}^{j+1},&1\leq i,j,k\leq m, \ j\neq i\neq
k.
\end{array}$$

Only rest to make the next  change of basis
$$\left\{\begin{array}{ll}
x'_i=x_i-\eta_i^1 e_{m+2},&1\leq i\leq m,\\[1mm]
y'_1=y_1-\theta_1^1 e_{m+2},&\\[1mm]
y'_j=y_j-\varepsilon_{j}^1 e_{j+1}-\theta_j^1 e_{m+2},&2\leq j\leq m,%\\[1mm]
\end{array}\right.$$
and we obtain the family of the theorem (renaming the parameters).
\end{proof}

\subsection{Particular case: Classification of Leibniz algebras when $m=1$.}
In this subsection we classify  the Leibniz algebras such that
$L/I\cong H_3$ and $I$ is the  $L/ I$-module with the minimal
faithful representation given by Equation (\ref{final}).  Let us
fix $\{x,y,z,e_1,e_2,e_3\}$ as basis of $L.$ All computations have
been made by  using the software $Mathematica.$

We have the following products:
$$\begin{array}{lll}
[e_{2}, x ] = e_1,& [e_{3},y ] = e_{2}, & [e_{3}, z] =
e_1,\\{}
[x,x]=\alpha_{1} e_1+\alpha_{2} e_2+\alpha_{3} e_3,  &
[x,y]=-z+\delta_1 e_1+\delta_2  e_2+  \delta_3  e_{3},&
[x,z]= \eta_{1} e_1+\eta_{2} e_2+\eta_{3} e_3, \\{}
[y ,y]=\beta_{1}  e_1+\beta_{2}  e_2+\beta_{3}  e_3,&
[y,x]=z,& [y,z]= \theta_{1} e_1+\theta_{2} e_2+\theta_{3} e_3,\\{}
[z,x]=\tau_1  e_1+ \tau_2  e_2+ \tau_3  e_3,& [z,y]=  \lambda_1
e_1+\lambda_2 e_2+\lambda_3 e_3,& [z,z]=\mu_1 e_1+\mu_2
e_2+\mu_3 e_3.
\end{array}$$

The Leibniz identity on the following triples imposes further
constraints on the products.
$$\begin{array}{lll}
\mbox{ Leibniz identity }& & \mbox{ Constraint }\\[1mm]
\hline \hline\\
\{x,x,y\}&\Rightarrow &-\eta_1=\tau_1-\delta_2,\ \ \alpha_3-\eta_2=\tau_2,\ \ -\eta_3=\tau_3,\\[1mm]
\{x,x,z\}&\Rightarrow &\alpha_3=\eta_2,\\[1mm]
\{x,y,z\}&\Rightarrow &\mu_1=\delta_3,\ \ \mu_2=-\eta_3,\ \ \mu_3=0,\\[1mm]
\{y,y,z\}&\Rightarrow & \beta_3=\theta_3=0,\\[1mm]
\{y,x,y\}&\Rightarrow & -\theta_1=\lambda_1-\beta_2,\ \ -\theta_2=\lambda_2,\ \ -\theta_3=\lambda_3,\\[1mm]
\{y,x,z\}&\Rightarrow & \mu_1=\theta_2,\ \ \mu_2=0,\\[1mm]
\{z,x,y\}&\Rightarrow & \mu_1=\lambda_2,\ \ \mu_2=\tau_3,\\[1mm]
\{z,x,z\}&\Rightarrow & \mu_2=\tau_3,\\[1mm]
\{z,y,z\}&\Rightarrow & \lambda_3=0,\\[1mm]
\{x,z,x\}&\Rightarrow &\eta_2=\alpha_3,\\[1mm]
\{x,z,y\}&\Rightarrow & \mu_1=\delta_3,\ \ \mu_2=-\eta_3.%,\\[1mm]
\end{array}$$

Thus, we get the following family of algebras, $L(\alpha_1,\alpha_2,\alpha_3,\beta_1,\beta_2,\delta_1,\delta_2,\eta_1,\theta_1):$
$$\left\{\begin{array}{lll}
[e_{2}, x ] = e_1,& [e_{3},y ] = e_{2}, & [e_{3}, z] =
e_1,\\{}
[x,x]=\alpha_{1} e_1+\alpha_{2} e_2+\alpha_{3} e_3,  &
[x,y]=-z+\delta_1 e_1+\delta_2  e_2,&
[x,z]= \eta_{1} e_1+\alpha_3 e_2, \\{}

[y ,y]=\beta_{1}  e_1+\beta_{2}  e_2 ,& [y,x]=z,&  [y,z]= \theta_{1} e_1,\\{}
[z,x]=  (\delta_2-\eta_1)e_1-2\alpha_3  e_2,& [z,y]=  (\beta_2-\theta_1)e_1.&
\end{array}\right.$$

\begin{teo}
Let $L$ be a Leibniz algebra such that $L/ I \cong H_{3}$ and
$I$ is the $L/ I$-module with the minimal faithful
representation given by Equation (\ref{final}). Then $L$  is
isomorphic to one of the following pairwise
non-isomorphic algebras:
$$\begin{array}{lll}
L(0,1,0,1,0,0,0,1,\lambda),\ \lambda\in \mathbb{F},&L(0,1,0,1,0,0,0,0,1),&L(0,1,0,1,0,0,0,0,0),\\[1mm]
L(0,1,0,0,0,0,0,1,\lambda),\ \lambda\in \mathbb{F},&L(0,1,0,0,0,0,0,0,1),&L(0,1,0,0,0,0,0,0,0),\\[1mm]
L(0,0,0,1,0,0,0,1,1),&L(0,0,0,1,0,0,0,1,0),&L(0,0,0,1,0,0,0,0,1),\\[1mm]
L(0,0,0,1,0,0,0,0,0), &L(0,0,0,0,0,0,0,1,1), &L(0,0,0,0,0,0,0,1,0),\\[1mm]
 L(0,0,0,0,0,0,0,0,1),&L(0,0,0,0,0,0,0,0,0),&L(0,0,1,1,0,0,0,1,\lambda),\ \lambda\in \mathbb{F},\\[1mm]
 L(0,0,1,1,0,0,0,0,1), &L(0,0,1,1,0,0,0,0,0), &L(0,0,1,0,0,0,0,1,1),\\[1mm]
 L(0,0,1,0,0,0,0,1,0), &L(0,0,1,0,0,0,0,0,1), &L(0,0,1,0,0,0,0,0,0).%\\[1mm]

\end{array}$$
\end{teo}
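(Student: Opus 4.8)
The plan is to start from the observation that, by the computation carried out immediately before the statement, every Leibniz algebra $L$ in this class is isomorphic to a member of the nine-parameter family $L(\alpha_1,\alpha_2,\alpha_3,\beta_1,\beta_2,\delta_1,\delta_2,\eta_1,\theta_1)$. Hence the entire problem reduces to deciding when two members of this family are isomorphic and then selecting one representative from each orbit. The first task is therefore to identify the group $\mathcal G$ of admissible base changes, that is, the linear isomorphisms of $L$ that can arise as isomorphisms between two algebras of the family.

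Since the ideal $I$ generated by the squares is intrinsic, any such isomorphism must carry $I$ onto $I$ and induce an automorphism of the quotient $L/I\cong H_3$; moreover it must respect the module structure, which is rigidly fixed as the minimal faithful representation (\ref{final}). I would first determine $\aut(H_3)$, and then, for each such automorphism $\psi$, the invertible maps $P$ of $I$ intertwining the action $\phi$ with $\phi\circ(\id\times\psi)$. Because the submodule chain $\langle e_1\rangle\subset\langle e_1,e_2\rangle\subset I$ is canonical, $P$ is forced to be triangular with respect to the ordered basis $(e_1,e_2,e_3)$, and this pins down $\mathcal G$ up to the purely affine freedom of replacing $x,y,z$ by $x+u,\,y+v,\,z+w$ with $u,v,w\in I$. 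Substituting these generators into the structure constants yields explicit transformation rules for the nine parameters.

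With these rules in hand, the reduction to normal form is a finite case analysis. I expect to use the affine shifts to clear $\alpha_1,\beta_2,\delta_1,\delta_2$, the rescalings of $e_1,e_2,e_3$ together with those of $x,y,z$ to normalize the leading coefficients $\alpha_2,\alpha_3,\beta_1,\eta_1$ into $\{0,1\}$, and finally to reduce $\theta_1$ either to $0,1$ or to an irreducible modulus $\lambda$. The branching arises precisely when a coefficient needed for a normalization vanishes, so that a different scaling must be employed; carefully tracking these degenerate branches should produce exactly the list of twenty-four forms.

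The last step is to prove pairwise non-isomorphism. Here I would attach to each representative a battery of $\mathcal G$-invariants: the dimension of $[L,L]$, the dimensions of the left and right annihilators (recall that $[L,I]=0$ already forces $I$ into the right annihilator), the rank of the symmetric bilinear map $(a,b)\mapsto[a,b]+[b,a]$ into $I$, and whether $z\in[L,L]$. For the one-parameter families I would further show that $\lambda$ is itself invariant under $\mathcal G$, so that distinct values of $\lambda$ give non-isomorphic algebras. The main obstacle is twofold: correctly and completely determining the admissible group $\mathcal G$ (it is the rigidity of the fixed representation (\ref{final}) that keeps this group small), and then choosing invariants fine enough both to separate the remaining discrete cases and to certify that $\lambda$ is a genuine modulus rather than an artefact removable by a further base change.
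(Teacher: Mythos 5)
Your proposal follows essentially the same route as the paper: reduce to the nine-parameter family $L(\alpha_1,\dots,\theta_1)$, compute how an admissible change of generators (which the rigidity of the representation forces to satisfy $A_2=B_1=0$ and to be triangular on $I$) acts on the parameters, clear $\alpha_1,\beta_2,\delta_1,\delta_2$ by shifts in $I$, normalize the remaining coefficients by the surviving scalings, and branch on which coefficients vanish. The only cosmetic difference is in the separation step, where the paper simply reads off from the explicit transformation formulas that the nullities of $\alpha_2,\alpha_3,\beta_1,\eta_1,\theta_1$ are invariant and that $\lambda$ is fixed once the other coefficients are normalized, rather than assembling abstract structural invariants as you suggest.
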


\begin{proof}
We can distinguish two cases:\\
%\begin{itemize}
%\item

\noindent {\bf Case 1:} $e_3\in [L,L].$ Then $\alpha_3=0.$\\

Applying the general change of basis generators:
$$x'=A_1x+A_2y+A_3 z+\sum_{k=1}^3 P_i e_i,\ \ y'=B_1 x+B_2y+B_3 z+\sum_{k=1}^3 Q_i e_i,\ \ e'_3=C_1 x+C_2 y+C_3z+\sum_{k=1}^3 R_i e_i$$
we derive the expressions of the new parameters in the new basis:
$$\begin{array}{ll}
\alpha'_1=\displaystyle\frac{\alpha_1 A_1^2B_2-\alpha_2
A_1^2B_3+\delta_2A_1A_3B_2+A_1B_2P_2+A_3B_2P_3}{A_1B_2^2},&
\alpha'_2=\displaystyle\frac{\alpha_2A_1^2}{B_2R_3},\\[3mm]
\beta'_1=\displaystyle\frac{\beta_1B_2}{A_1R_3},&
\beta'_2=\displaystyle\frac{\beta_2B_2+Q_3}{R_3},\\[3mm]
\delta'_1=\displaystyle\frac{\beta_2A_3B_2+\delta_1A_1B_2+A_1Q_2+A_3Q_3}{A_1B_2R_3},&
\delta'_2=\displaystyle\frac{\delta_2A_1+P_3}{R_3},\\[3mm]
\eta'_1=\displaystyle\frac{\eta_1A_1+P_3}{R_3},&
\theta'_1=\displaystyle\frac{\theta_1B_2+Q_3}{R_3},%\\[2mm]
\end{array}$$
and the following restrictions:
$$\left\{\begin{array}{l}
C_1=C_2=C_3=B_1=A_2=0,\\
R_5=-\displaystyle\frac{A_3R_3}{A_1},\\
A_1B_2R_3\neq 0.
\end{array}\right.$$

We set
$$\begin{array}{lll}
P_3=-\delta_2 A_1&\Rightarrow &\delta'_2=0,\\
Q_3=-\beta_2B_2&\Rightarrow & \beta'_2=0,\\
Q_2=-\delta_1 B_2&\Rightarrow & \delta'_1=0,\\
P_2=-\displaystyle\frac{(\alpha_1B_2-\alpha_2B_3)A_1}{B_2}&\Rightarrow
&\alpha_1'=0,
\end{array}$$
then we get
$$\begin{array}{lllll}
[e_{2}, x ] = e_1,& & [e_{3},y ] = e_{2},& &  [e_{3}, z] =
e_1,\\{}
[x,x]= \alpha'_{2} e_2, & &[x,y]=-z ,& &[x,z]= \eta'_{1} e_1, \\{}
[y ,y]=\beta'_{1}  e_1
,&  &[y,x]=z,& &[y,z]= \theta'_{1} e_1,\\{}
[z,x]=  -\eta'_1  e_1,& &[z,y]=
-\theta'_1e_1,& &
\end{array}$$
where
$$\begin{array}{llll}
\alpha'_2=\displaystyle\frac{\alpha_2A_1^2}{B_2R_3},&
\beta'_1=\displaystyle\frac{\beta_1B_2}{A_1R_3},&
\eta'_1=\displaystyle\frac{(\eta_1-\delta_2)A_1}{R_3},&
\theta'_1=\displaystyle\frac{(\theta_1-\beta_2)B_2}{R_3}.%\\[2mm]
\end{array}$$

We observe that the nullities of $\alpha_2,\ \beta_1,\ \eta_1,\
\theta_1$ are invariant. Thus, we can distinguish the following
non-isomorphic cases. An appropriate choice of the parameter
values ($A_1,$ $B_2$ and $R_3$)  allows us to obtain the following
algebras or families of algebras.
$$\begin{array}{|l|l|}
\hline
\mbox{Case}&\mbox{Algebra}\\[2mm]
\hline \hline
\alpha_2\neq 0,\ \beta_1\neq 0,\ \eta_1\neq 0,&L(0,1,0,1,0,0,0,1,\lambda),\ \lambda\in \mathbb{F},\\[1mm]
\alpha_2\neq 0,\ \beta_1\neq 0,\ \eta_1= 0,\ \theta_1\neq 0,&L(0,1,0,1,0,0,0,0,1),\\[1mm]
\alpha_2\neq 0,\ \beta_1\neq 0,\ \eta_1= 0,\ \theta_1= 0,&L(0,1,0,1,0,0,0,0,0),\\[1mm]
\alpha_2\neq 0,\ \beta_1=0,\ \eta_1\neq  0,&L(0,1,0,0,0,0,0,1,\lambda),\ \lambda\in \mathbb{F},\\[1mm]
\alpha_2\neq 0,\ \beta_1=0,\ \eta_1= 0,\ \theta_1\neq 0,&L(0,1,0,0,0,0,0,0,1),\\[1mm]
\alpha_2\neq 0,\ \beta_1=0,\ \eta_1= 0,\ \theta_1= 0,&L(0,1,0,0,0,0,0,0,0),\\[1mm]
\alpha_2= 0,\ \beta_1\neq 0,\ \eta_1\neq 0,\ \theta_1\neq 0,&L(0,0,0,1,0,0,0,1,1),\\[1mm]
\alpha_2= 0,\ \beta_1\neq 0,\ \eta_1\neq 0,\ \theta_1=0,&L(0,0,0,1,0,0,0,1,0),\\[1mm]
\alpha_2= 0,\ \beta_1\neq 0,\ \eta_1= 0,\ \theta_1\neq 0,&L(0,0,0,1,0,0,0,0,1),\\[1mm]
\alpha_2= 0,\ \beta_1\neq 0,\ \eta_1= 0,\ \theta_1= 0,&L(0,0,0,1,0,0,0,0,0),\\[1mm]
\alpha_2= 0,\ \beta_1= 0,\ \eta_1\neq 0,\ \theta_1\neq 0,&L(0,0,0,0,0,0,0,1,1),\\[1mm]
\alpha_2= 0,\ \beta_1= 0,\ \eta_1\neq 0,\ \theta_1= 0,&L(0,0,0,0,0,0,0,1,0),\\[1mm]
\alpha_2= 0,\ \beta_1= 0,\ \eta_1= 0,\ \theta_1\neq 0,&L(0,0,0,0,0,0,0,0,1),\\[1mm]
\alpha_2= 0,\ \beta_1= 0,\ \eta_1= 0,\ \theta_1= 0,&L(0,0,0,0,0,0,0,0,0).\\[1mm]
\hline
\end{array}$$

\noindent {\bf Case 2:} $e_3\notin [L,L].$ Then $\alpha_3\neq 0.$
Making the following change of basis in
$L(\alpha_1,\alpha_2,\alpha_3,\beta_1,\beta_2,\delta_1,\delta_2,\eta_1,\theta_1)$
$$\left\{\begin{array}{l}
e'_3=\alpha_1 e_1+\alpha_2e_2+\alpha_3e_3,\\{}
e'_2=\alpha_3e_2,\\{} e'_1=\alpha_3e_1,
\end{array}\right.$$
we obtain $L(0,0,1,\beta_1,\beta_2,\delta_1,\delta_2,\eta_1,\theta_1):$
$$\left\{\begin{array}{lll}
[e_{2}, x ] = e_1,& [e_{3},y ] = e_{2}, & [e_{3}, z] =
e_1,\\{}
[x,x]= e_3, & [x,y]=-z+\delta_1 e_1+\delta_2  e_2,&[x,z]= \eta_{1} e_1+ e_2, \\{}
[y ,y]=\beta_{1}  e_1+\beta_{2}  e_2,& [y,x]=z,& [y,z]= \theta_{1} e_1,\\{}
[z,x]=(\delta_2-\eta_1)  e_1-2   e_2,& [z,y]=  (\beta_2-\theta_1)e_1.&
\end{array}\right.$$

Analogously to the previous case, by making the general change of
basis of generators
$$x'=A_1x+A_2y+A_3 z+\sum_{k=1}^3 P_i e_i,\ \qquad y'=B_1 x+B_2y+B_3 z+\sum_{k=1}^3 Q_i e_i,$$
we derive the expressions of the new parameters in the new basis:
$$\begin{array}{ll}
\beta_1'=\displaystyle\frac{\beta_1B_2}{A_1^3},&
\beta_2'=\displaystyle\frac{\beta_2B_2+Q_3}{A_1^2},\\[2mm]
\delta_1'=\displaystyle\frac{\beta_2A_3B_2^2+A_1B_3^2+\delta_1A_1B_2^2+A_1B_2Q_2+A_3B_2Q_3}{A_1^3B_2^2},&
\delta'_2=\displaystyle\frac{-A_1B_3+\delta_2A_1B_2+B_2P_3}{A_1^2B_2},\\[2mm]
\eta'_1=\displaystyle\frac{-A_1B_3+\eta_1
A_1B_2+B_2P_3}{A_1^2B_2},& \theta'_1=\displaystyle\frac{\theta_1
B_2+Q_3}{A_1^2},
\end{array}$$
with the restriction:
$$\left\{\begin{array}{l}
A_2=B_1=0,\\
A_1B_2\neq 0.
\end{array}\right.$$

By putting
$$\begin{array}{lll}
P_3=\displaystyle\frac{A_1(B_3-\delta_2B_2)}{B_2}&\Rightarrow &\delta'_2=0,\\
Q_3=-\beta_2B_2&\Rightarrow & \beta'_2=0,\\
Q_2=-\displaystyle\frac{B_3^2+\delta_1B_2^2}{B_2}&\Rightarrow
&\delta_1'=0,
\end{array}$$
we deduce
$$ \begin{array}{lllll}
[e_{2}, x ] = e_1,& &[e_{3},y ] = e_{2}, & &[e_{3}, z] =
e_1,\\{}
[x,x]= e_3,&  &[x,y]=-z ,& &[x,z]= \eta'_{1} e_1+e_2, \\{}
[y ,y]=\beta'_{1}  e_1 ,&  &[y,x]=z,&  &[y,z]=
\theta'_{1} e_1,\\{}
[z,x]=  -\eta'_1  e_1-2e_2,& &[z,y]=
-\theta'_1e_1,& &
\end{array}$$
where
$$\begin{array}{lll}
%\alpha'_2=\displaystyle\frac{\alpha_2A_1^2}{B_2R_3},&
\beta'_1=\displaystyle\frac{\beta_1B_2}{A_1^3},&
\eta'_1=\displaystyle\frac{\eta_1-\delta_2}{A_1},&
\theta'_1=\displaystyle\frac{(\theta_1-\beta_2)B_2}{A_1^2}.%\\[2mm]
\end{array}$$

We observe that the nullities of $\beta_1,\ \eta_1,\ \theta_1$ are
invariant. Thus, we can distinguish the following non-isomorphic cases. An appropriate choice of the parameter values
($A_1$ and $B_2$)  allows us to obtain the following algebras or
families of algebras.
$$\begin{array}{|l|l|}
\hline
\mbox{Case}&\mbox{Algebra}\\[2mm]
\hline \hline
 \beta_1\neq 0,\ \eta_1\neq 0,&L(0,0,1,1,0,0,0,1,\lambda),\ \lambda\in \mathbb{F},\\[1mm]
 \beta_1\neq 0,\ \eta_1= 0,\ \theta\neq 0,&L(0,0,1,1,0,0,0,0,1),\\[1mm]
 \beta_1\neq 0,\ \eta_1= 0,\ \theta= 0,&L(0,0,1,1,0,0,0,0,0),\\[1mm]
 \beta_1= 0,\ \eta_1\neq  0,\ \theta\neq 0,&L(0,0,1,0,0,0,0,1,1),\\[1mm]
 \beta_1= 0,\ \eta_1\neq  0,\ \theta= 0,&L(0,0,1,0,0,0,0,1,0),\\[1mm]
 \beta_1= 0,\ \eta_1=  0,\ \theta\neq 0,&L(0,0,1,0,0,0,0,0,1),\\[1mm]
 \beta_1= 0,\ \eta_1=  0,\ \theta=0,&L(0,0,1,0,0,0,0,0,0).\\[1mm]
\hline
\end{array}$$

The proof is complete.

\end{proof}

%%%%%%%%%%%%%%%%%%%%%%%%%%%%%%%%%%%%%%%%%%%%%%%%%%%%%%%%%%%%%%%%%%%%%

\

{\sc Antonio J. Calder\'{o}n.}  Dpto. Matem\'{a}ticas. Universidad
de C\'{a}diz. 11510 Puerto Real, C\'{a}diz. (Spain), e-mail:
\emph{ajesus.calderon@uca.es}

\

{\sc Luisa M. Camacho.}  Dpto. Matem\'{a}tica Aplicada I.
Universidad de Sevilla. Avda. Reina Mercedes, s/n. 41012 Sevilla.
(Spain), e-mail: \emph{lcamacho@us.es}

\

{\sc Bakhrom A. Omirov.} Institute of Mathematics. National
University of Uzbekistan, F. Hodjaev str. 29, 100125, Tashkent
(Uzbekistan), e-mail: \emph{omirovb@mail.ru}

\end{document}